\newtheorem{theorem}{Theorem}
\newtheorem{lemma}[theorem]{Lemma}
\newtheorem{corollary}[theorem]{Corollary}
\theoremstyle{definition}
\newtheorem{hyp}[theorem]{Assumption}
\newtheorem{remark}[theorem]{Remark}
\newtheorem{definition}[theorem]{Definition}
\newcommand*{\tnoteref}[1]{\protect\footnotemark}
\newcommand*{\tnotetext}[2][]{\footnotetext{#2}}
\newcommand*{\ead}[1]{\stepcounter{footnote}\footnotetext[\thefootnote]{#1}}
\newcommand*{\cortext}[2][]{}
\newcommand*{\corref}[1]{}
\newcommand*{\address}[2][]{{\centering$^{#1}$\itshape\small#2\par}}
\newcommand*{\artauthor}[1]{\begin{center}\large#1\end{center}}
\renewcommand*{\author}[2][]{}
\newcommand*{\MSC}{\paragraph{2000 MSC:}}
\newcommand*{\sep}{,\, }
\renewcommand*{\title}[1]{\begin{center}\sffamily\Large#1\end{center}}
\date{}
\renewenvironment{abstract}{\vspace{2em}\hrule\noindent\paragraph{Abstract:}}{}
\newenvironment{keyword}{\noindent \paragraph{Keywords:}}{}
\newenvironment{frontmatter}{}{\vspace{1em}\hrule}
\renewenvironment{proof}{\noindent\textit{Proof.}}{}
\newcommand*{\R}{\mathbb{R}}
\newcommand*{\C}{\mathbb{C}}
\newcommand*{\etoile}{^{\star}}
\newcommand*{\range}[1]{\mathcal{R}\left(#1\right)}
\newcommand*{\I}{^{\infty}}
\newcommand*{\rest}[1]{\displaystyle\left|_{ #1}\right.}
\newcommand*{\dn}{\partial_{\nu}}
\newcommand*{\grad}{\nabla}
\newcommand*{\rond}{\circ}
\newcommand*{\petito}[1]{\ensuremath{\mathop{}\mathopen{}{\scriptstyle\mathcal{O}}\mathopen{}\left(#1\right)}}
\newcommand*{\abs}[1]{\left\lvert#1\right\rvert}
\newcommand*{\norme}[1]{\left\lVert#1\right\rVert}
\newcommand*{\ps}[2]{\left\langle#1,\,#2\right\rangle}
\newcommand*\mat[1]{\begin{pmatrix}#1\end{pmatrix}}
\newcommand*{\pardef}{\mathrel:=}
\newcommand*{\inv}{^{-1}}
\newcommand*{\re}{{\mathrm{Re}}\:}
\newcommand*{\im}{{\mathrm{Im}}\:}
\renewcommand\appendix{\par
\setcounter{section}{0}%
\setcounter{subsection}{0}%
\setcounter{table}{0}
\setcounter{figure}{0}
\setcounter{equation}{0}
\gdef\thetable{\Alph{table}}
\gdef\thefigure{\Alph{figure}}
\gdef\theequation{\Alph{section}-\arabic{equation}}
\section*{Appendix}\small
\gdef\thesection{\Alph{section}}
\setcounter{section}{1}}
\begin{document}

\begin{frontmatter}

\title{An optimization approach for the localization of defects in an inhomogeneous medium from acoustic far-field measurements at a fixed frequency \tnoteref{frae}}
\tnotetext[frae]{Support for some of the authors of this work was provided by the FRAE (Fondation de Recherche pour l'A\'eronautique et l'Espace, \texttt{http://www.fnrae.org/}), research project IPPON.}

\artauthor{Yann Grisel$^{a,}$\footnotemark[2], Jérémie Fourbil$^{b}$ and Vincent Mouysset$^{b,}$\footnotemark[3]}
\author[a]{Yann Grisel\corref{cor1}}
\ead{yann.grisel@iut-tlse3.fr}
\author[b]{Jérémie Fourbil}
\author[b]{Vincent Mouysset\corref{cor2}}
\ead{mouysset@onera.fr}

\cortext[cor1]{Corresponding author}
\cortext[cor2]{Principal corresponding author}

\address[a]{Institut de Mathématiques de Toulouse, 31062 Toulouse, France}
\address[b]{Onera - The French Aerospace Lab, 31055 Toulouse, France}

\begin{abstract}
We are interested in the localization of defects in non-absorbing inhomogeneous media with far-field measurements generated by plane waves.
In localization problems, most so-called sampling methods are based on a characterization involving point-sources and the range of some implicitly defined operator.
We present here a way to deal with this implicit operator by the means of an optimization approach in the lines of the well-known inf criterion for the factorization method.
\end{abstract}

\begin{keyword}
Inverse acoustic scattering \sep Inhomogeneous media \sep Defects localization%
\MSC 35R30 \sep 35P25 \sep 35R05 \sep  65K05%
\end{keyword}

\end{frontmatter}

\section{Introduction}
 
\begin{figure}[htpb]
\centering
\includegraphics[height=3cm]{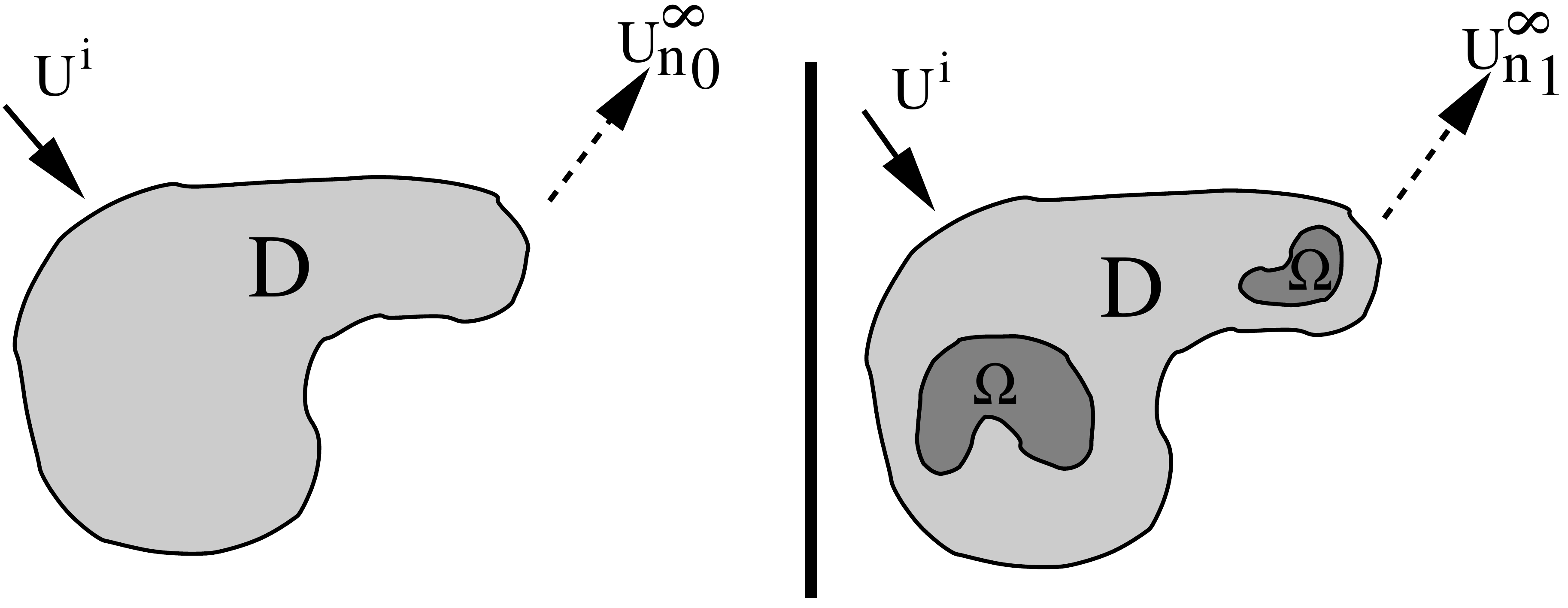}
\caption{Reference setting (left) and actual setting (right).}
\label{fig:schema.inclusion}
\end{figure}



We consider an inverse scattering problem consisting in shape reconstruction from physical measurements.
These problems are generally non-linear and ill-posed.
More specifically, we address the problem of reconstructing the support of a perturbation in a given inhomogeneous background medium from acoustic far-field measurements generated with plane waves. 
It may indeed happen that, in some places, the actual index is different from the reference value, as seen in Figure~\ref{fig:schema.inclusion}.
This could happen for instance from a deterioration or an incorrect estimation of the actual index. 
We then say that there is a defect at any point where the reference index is different from the actual index. 

A wide range of methods achieve the localization of obstacles by a \emph{sampling} approach: the points of the unknown domain are characterized by a binary test that has to be applied to the whole space.
For most of them, the first formulation of this pointwise test is to check if some well chosen test-function is in the range of an implicitly defined operator.
See~\cite{art.colton.03,art.potthast.06} and references therein for a topical review.
A natural way to proceed is then to connect the range of the implicit operator to the range of an operator explicitly defined from the actually available measurements.
This is the principle of the \emph{linear sampling} method~\cite{art.colton.96,art.colton.97,art.collino.2002} or of the \emph{factorization} method~\cite{art.kirsch.98,art.kirsch.02,art.arens.04}. 
Yet, when looking for perturbations in non-homogeneous background media, it is only recently that a factorization method has been proposed to reconstruct the shape of defects~\cite{art.nachman.07,art.grisel.12}.

However, we investigate in this paper an optimization approach in the lines of the \emph{inf} criterion~\cite{art.kirsch.00} to deal with the implicit operator's range.
We show that this leads to a characterization of the defects as the support of the following function: 
\[\mathcal M_{W}(z) \pardef \inf\left\{ f_{W}(\Psi),\ \Psi \in L^2(S^{d-1}) \text{ and } {\ps{\Psi}{\overline{u_{n_0}(\cdot,z)}}}_{L^2(S^{d-1})} = 1\right\},\]
where $u_{n_0}$ is the total field for the reference index and the form $f_W$ is given by 
\[f_{W}(\Psi) \pardef \abs{\ps{ W\Psi}{\Psi}_{L^2(S^{d-1})}}^{\frac{1}{2}},\]
with a measurements operator $W$ explicitly built from the available data.
Throughout this paper, $\ps{\cdot}{\cdot}_X$ stands for the usual hermitian inner product in the Hilbert space $H$.


This paper is structured as follows.
In section~\ref{sec:presentation}, the mathematical setting is specified and the implicit localization of the defects is recalled.
This localization is then expressed as a binary pointwise test involving an constrained optimization problem in section~\ref{sec:optimisation}.
Finally, in section~\ref{sec:iteratives}, we investigate numerical methods for solving this optimization problem.
We end by some conclusions.

\section{Formal localization}\label{sec:presentation}

If we consider time-harmonic acoustic waves with a fixed wave number \(k\), the spatial part of the wave equation is modeled by the Helmholtz equation~\cite{book.colton.1}. 
Inhomogeneous media are then represented by an acoustic refraction index, denoted by \(n \in L\I(\R^{d})\), and so the total field, denoted by \(u_n\),
is assumed to satisfy 
\begin{equation}\label{eq:Helmholtz}
\Delta u_n +k^2n(x) u_n = 0, \quad x \in \R^d,
\end{equation}
where \(d\) is the problem's dimension (\(d = 2\) or 3).
We consider compactly supported inhomogeneities and denote by \(D\) the support of \(n(x)-1\). 
Also, we denote an incoming wave satisfying~(\ref{eq:Helmholtz}) with \(n=1\)  by \(u^i \in L^2_{loc}(\R^{d})\).
The total field is then the sum of this incoming wave and the wave scattered by the inhomogeneous medium, denoted by  \(u^s \in L^2_{loc}(\R^{d})\):
\begin{equation}\label{eq:u_n}
    u_n \pardef u^s+u^i,
\end{equation}
where the scattered wave is assumed to satisfy the Sommerfeld radiation condition
\begin{equation}\label{eq:Sommerfeld}
    \partial_{r}u^s=iku^s+\petito{\abs{x}^{-\frac{d-1}{2}}}.
\end{equation}
Then, the linear system~(\ref{eq:Helmholtz})-(\ref{eq:Sommerfeld}) defines \(u_n\) uniquely from \(u^i\), and it is known to be invertible in \(L^2(D)\). 
Thus, let us denote the corresponding automorphism by
\[\begin{array}{rclcl}
    \mathcal{T}_n &: &L^2(D) &\to &L^2(D),\\
         &&u^i &\mapsto &u_n.
\end{array}\]

Besides, the outgoing part of a wave has an asymptotic behaviour called the far field pattern, 
denoted by \(u_n\I \in L^2(S^{d-1})\), see figure~\ref{fig:schema.scattering}, and given by the Atkinson expansion~\cite{conf.venkov.08}
\[
    u_n(x) \pardef u^i(x)+\gamma \frac{e^{ik\abs{x}}}{\abs{x}^{\frac{d-1}{2}}}u_n\I(\hat x)+\petito{\abs{x}^{-\frac{d-1}{2}}},\quad \hat x \pardef \frac{x}{\abs{x}} \in S^{d-1},
\]
where \(\gamma\) depends only on the dimension and is defined by
\[\gamma \pardef 
    \begin{cases}  \frac{ e^{i\pi /4} }{ \sqrt{8\pi k} } & \text{if } d=2,  \\  \frac{1}{4\pi} & \text{if } d=3.  \end{cases}
\]

\begin{figure}[tpb]
\centering
\includegraphics[height=3cm]{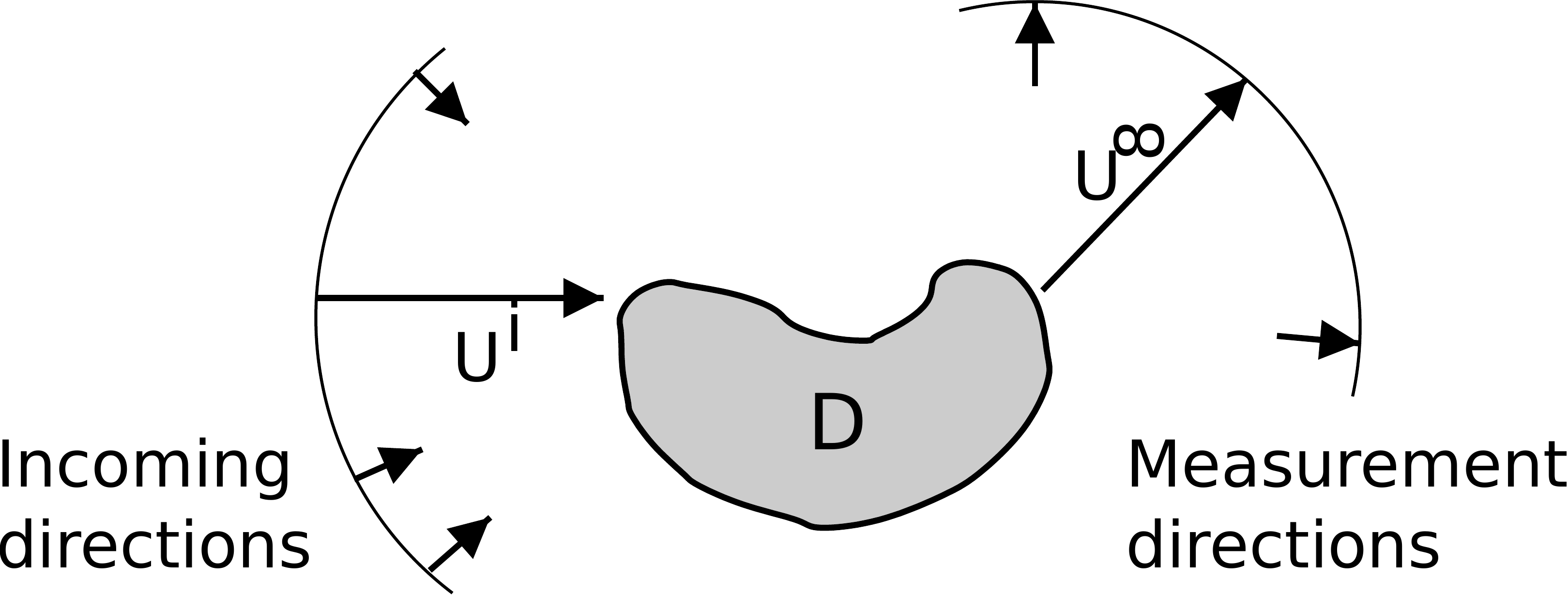}
\caption{General setting and notations.}
\label{fig:schema.scattering}
\end{figure}

Furthermore, for practical reasons, we will mainly consider scattered waves having a plane-wave source.
These plane-waves are defined by \[u^i(\hat\theta,x) \pardef exp({ik\hat\theta \cdot x}),\] where \(\hat\theta\) is a unitary vector in \(S^{d-1}\) as depicted on figure~\ref{fig:schema.scattering}.
Then, let us denote the total field at the point \(x \in \R^d\) and with a plane-wave source of incoming direction \(\hat \theta\), by
\[{u_n(\hat\theta,x)} \pardef \mathcal{T}_n(u^i(\hat\theta,\cdot))(x).\]
The corresponding far-field pattern in the measurement direction $\hat x \in S^{d-1}$ will be denoted by $u_n\I(\hat \theta, \hat x)$.
Lastly, these measurements will be used in the form of the classical far-field operator \(F_n : L^2(S^{d-1}) \to L^2(S^{d-1})\), defined by
\[F_ng (\hat x) \pardef \ps{g}{\overline{u_n\I(\cdot,\hat x)}}_{L^2(S^{d-1})}.\]

We want to reconstruct the shape of defects in a reference medium whose index is denoted by \(n_0 \in L\I(D)\). 
Let then \(n_1 \in L\I(D)\) denote the actual index, altered by the presence of these defects.
So,  denote the support of the difference between the two indices (see figure~\ref{fig:schema.inclusion}) by \[\Omega \pardef \mathrm{support}(n_1-n_0).\] 
Thus, the goal is to reconstruct the domain \(\Omega\), through its associated characteristic function denoted $\mathbf{1}_\Omega$, from the reference index \(n_0\) and far-field measurements \(u_{n_1}\I\).
Yet, it has been shown that the unknown domain can be characterized by a set of test functions and the range of some implicitly defined operator.

\begin{theorem}[Implicit domain characterization]\cite[Theorem 3.2]{art.grisel.12}\label{thm:omega.1}
    Let us define the operator \(C : L^2(D) \to L^2(S^{d-1})\) by
    \[Cf(\hat x) = \langle f,u_{n_0}(\hat x,\cdot)\rangle_{L^2(D)}.\]
    Then, for each \(z \in \R^d\), we have
    \begin{equation}\label{eq:omega.1}
    z \in \Omega \iff \overline{u_{n_0}(\cdot,z)} \in \range{C\mathbf{1}_{\Omega}},
    \end{equation}
    where $\mathbf{1}_{\Omega} : L^2(\Omega) \to L^2(D)$ is the restriction to $\Omega$ given by
    $$\mathbf{1}_{\Omega}f(\hat x) = \begin{cases}0,\quad x \notin \Omega\\f(x), \quad x \in \Omega\end{cases}.$$
\end{theorem}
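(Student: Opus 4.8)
The plan is to reinterpret both sides of~\eqref{eq:omega.1} as classical scattering quantities for the reference medium $n_0$, thereby reducing the statement to the familiar characterization of a source support by the range of a source-to-far-field map. First I would make rigorous the formal identity ``$C\delta_z=\overline{u_{n_0}(\cdot,z)}$'': using a mixed reciprocity relation between plane-wave total fields and point sources in the medium $n_0$, the test function $\overline{u_{n_0}(\cdot,z)}$ is, up to conjugation, the antipodal map and the constant $\gamma$, the far-field pattern $G_{n_0}^{\infty}(\cdot,z)$ of the outgoing Green function $G_{n_0}(\cdot,z)$ of $\Delta+k^2 n_0$ with source at $z$. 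In parallel, writing $Cf(\hat x)=\int_D f(y)\,\overline{u_{n_0}(\hat x,y)}\,dy$ and applying the same reciprocity, I would identify $\range{C\mathbf{1}_\Omega}$ with the set of far-field patterns of radiating solutions of $\Delta v+k^2 n_0 v=\rho$ whose source $\rho\in L^2(D)$ is supported in $\Omega$. Since conjugation and the antipodal map are isometries of $L^2(S^{d-1})$, the equivalence~\eqref{eq:omega.1} then reads exactly as the statement that the point-source far field at $z$ lies in that range precisely when $z\in\Omega$.

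For the direction $z\in\Omega\Rightarrow\overline{u_{n_0}(\cdot,z)}\in\range{C\mathbf{1}_\Omega}$, I want to exhibit an honest density $g\in L^2(\Omega)$ with $C\mathbf{1}_\Omega g=\overline{u_{n_0}(\cdot,z)}$. Since $\delta_z\notin L^2$, I would not use the point source directly. Instead I would fix a small ball $B\subset\Omega$ around $z$ and represent the exterior field of $G_{n_0}(\cdot,z)$ as the field radiated by a regular $L^2$ source supported in $B$: solving an interior boundary value problem reproducing the Cauchy data of $G_{n_0}(\cdot,z)$ on $\partial B$ and converting the result into a volume source in $B\subset\Omega$ yields a radiating solution with the same far-field pattern. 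Translating back through the identification of the first paragraph, that source is the desired $g$.

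For the converse $z\notin\Omega\Rightarrow\overline{u_{n_0}(\cdot,z)}\notin\range{C\mathbf{1}_\Omega}$ I argue by contradiction. Assume $z\notin\Omega$ and $\overline{u_{n_0}(\cdot,z)}=C\mathbf{1}_\Omega g$ for some $g\in L^2(\Omega)$. Then the Green function $G_{n_0}(\cdot,z)$ and the volume potential $v_g$ associated with $g$ over $\Omega$ are two radiating solutions of $\Delta+k^2 n_0$ sharing the same far-field pattern. By Rellich's lemma together with unique continuation they coincide on $\R^d\setminus\Omega$, at least on the connected component containing $z$. But $v_g$ solves the homogeneous equation there, hence is real-analytic near $z$, whereas $G_{n_0}(\cdot,z)$ is singular at $z$; since $z$ lies in that exterior region, this is a contradiction, which proves the claim.

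The step I expect to be the main obstacle is the direction $z\in\Omega$: membership in the genuine range of $C\mathbf{1}_\Omega$, as opposed to its closure, forces a true potential-theoretic construction of an $L^2(\Omega)$ density reproducing the point-source far field, rather than a soft density or approximation argument. A second, more technical difficulty is the bookkeeping of conjugations and of incoming-versus-outgoing radiation conditions — the reference medium is non-absorbing, so $n_0$ is real and $\overline{u_{n_0}(\hat x,\cdot)}$ satisfies an incoming condition — which must be handled carefully both to make the reciprocity identification precise and to ensure the unique-continuation argument applies across $\partial\Omega$ and up to the singular point $z$.
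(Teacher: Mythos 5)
You should note first that the paper itself contains no proof of this statement: Theorem~\ref{thm:omega.1} is imported verbatim, with proof, from \cite[Theorem 3.2]{art.grisel.12}, so the only comparison available is with the standard argument in that reference and in the factorization-method literature. Your strategy is exactly that argument: use mixed reciprocity to identify $\range{C\mathbf{1}_\Omega}$ (up to conjugation and the antipodal map, which are isometries) with the far-field patterns of radiating solutions of $\Delta v + k^2 n_0 v = \rho$ with $\rho \in L^2(\Omega)$, and to identify $\overline{u_{n_0}(\cdot,z)}$ with the far field of the background Green function $G_{n_0}(\cdot,z)$; then prove the two inclusions by an explicit source construction and by Rellich plus unique continuation, respectively. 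One simplification: the step you flag as the main obstacle (genuine range membership for $z\in\Omega$) is handled in the literature by a cutoff rather than your Cauchy-data lifting. Take $\chi$ smooth with $\chi=0$ near $z$ and $\chi=1$ outside a ball $B$ with $\overline{B}\subset\Omega$; then $w \pardef \chi\, G_{n_0}(\cdot,z)$ is radiating, has the same far field as $G_{n_0}(\cdot,z)$, and satisfies $(\Delta+k^2n_0)w = 2\grad\chi\cdot\grad G_{n_0}(\cdot,z) + (\Delta\chi)\,G_{n_0}(\cdot,z)$, an $L^2$ source supported in $\overline{B}\subset\Omega$. Your interior-BVP construction also works, but the cutoff shows the obstacle is not one.

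Two points in your converse direction need repair. First, you cannot assert that $v_g$ is real-analytic near $z$: if $z \in D\setminus\Omega$, the coefficient $n_0$ is merely $L^\infty$ there. What elliptic regularity gives is $v_g \in H^2_{loc}$ near $z$, hence (for $d=2,3$) continuity and local boundedness, which already contradicts the $\log\abs{x-z}$, respectively $\abs{x-z}^{-1}$, blow-up of $G_{n_0}(\cdot,z)$, whose leading singularity is that of the free-space fundamental solution since $n_0-1$ is bounded. Second, and more substantively, Rellich's lemma yields equality of $v_g$ and $G_{n_0}(\cdot,z)$ only outside a large ball, and unique continuation then propagates it through the \emph{unbounded} connected component of $\R^d\setminus(\overline{\Omega}\cup\{z\})$, not, as you write, through ``the connected component containing $z$''. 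If $z$ lies in a bounded cavity of $\R^d\setminus\overline{\Omega}$, the propagation never reaches $z$ and the argument breaks down; this is precisely why the cited source works under standing assumptions on $\Omega$ (openness of the sampled region, connectedness of the complement) that your write-up should state explicitly --- note also that since removing the point $z$ does not disconnect an open connected set in dimension $d\geqslant 2$, equality does reach a punctured neighbourhood of $z$ once $z$ is in the unbounded component, which is what the singularity comparison requires. Likewise, your ball $B\subset\Omega$ in the first direction presupposes $z$ interior to $\Omega$, consistent with those assumptions. With these repairs, your proposal is essentially the proof of \cite[Theorem 3.2]{art.grisel.12}.
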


\section{Explicit identification of the defects}\label{sec:optimisation}

This section gives an explicit formulation of characterization~(\ref{eq:omega.1}). 
To do so, we proceed in three steps. 
First, we introduce a practical result formulating the belonging to the range of any given (bounded) operator $L$ as an \textit{inf} criterion based on any function related to $L$ through specific assumptions. 
Then, we construct such a well-suited function for the operator we are interested in, namely $C\mathbf{1}_{\Omega}$. 
Finally, we can give the explicit characterization of the defect~$\Omega$.

\subsection{Formulation of a bounded operator's range through an \textit{inf} criterion}

To deal with the range of $C\mathbf{1}_{\Omega}$, depending on the unknown domain $\Omega$, let us recall the following characterization of an operator's range.

\begin{lemma}[Range characterization]\cite[Lemma 2.1]{art.nachman.07}\label{lem:image}
    Let \(L : H_1 \to H_2\) be a bounded operator between two Hilbert spaces $H_1$ and $H_2$, and let \(\varphi \in H_2\).
    Then, noting $H_2\etoile$ the dual of $H_2$ and identifying $H_1$ with its dual, \(\varphi \in \range{L}\) if and only if there exists \(c>0\) such that for all \(\Psi \in H_2\etoile\) 
    \begin{equation}\label{eq:inegalite.1}
    \abs{\ps{\Psi}{\varphi}} \leqslant c\norme{L\etoile \Psi}.
    \end{equation}
\end{lemma}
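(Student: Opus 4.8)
The plan is to prove both implications of the equivalence $\varphi \in \range{L}$ iff the inequality~(\ref{eq:inegalite.1}) holds for some $c>0$. Throughout I would use the identification of $H_1$ with its own dual and of $H_2^\star$ with $H_2$ (via the Riesz representation), so that $L^\star : H_2 \to H_1$ is the usual Hilbert-space adjoint and $\ps{\Psi}{\varphi}$ is a genuine inner product in $H_2$.

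The easy direction is the forward one. Assuming $\varphi = Lg$ for some $g \in H_1$, I would write, for any $\Psi \in H_2$,
\[
\abs{\ps{\Psi}{\varphi}} = \abs{\ps{\Psi}{Lg}} = \abs{\ps{L\etoile\Psi}{g}} \leqslant \norme{g}\,\norme{L\etoile\Psi},
\]
using the defining property of the adjoint and the Cauchy--Schwarz inequality. This gives~(\ref{eq:inegalite.1}) with $c = \norme{g}$.

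The substantive direction is the converse: from the inequality~(\ref{eq:inegalite.1}) I must produce a preimage $g$ with $\varphi = Lg$. The key idea is that the inequality says the linear functional $\ell(L\etoile\Psi) \pardef \ps{\Psi}{\varphi}$ is well-defined and bounded on the range of $L\etoile$. To make this precise I would first check that $\ell$ is well-defined: if $L\etoile\Psi_1 = L\etoile\Psi_2$, then~(\ref{eq:inegalite.1}) applied to $\Psi_1-\Psi_2$ forces $\ps{\Psi_1-\Psi_2}{\varphi}=0$, so $\ell$ depends only on $L\etoile\Psi$. The bound~(\ref{eq:inegalite.1}) then shows $\ell$ is continuous on $\range{L\etoile}\subset H_1$ with norm at most $c$; extend it by continuity to the closure $\overline{\range{L\etoile}}$ and by zero on the orthogonal complement, giving a bounded functional on all of $H_1$. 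By the Riesz representation theorem there is $g\in H_1$ with $\ell(\cdot)=\ps{\cdot}{g}$, so that $\ps{\Psi}{\varphi} = \ps{L\etoile\Psi}{g} = \ps{\Psi}{Lg}$ for every $\Psi\in H_2$; since $\Psi$ is arbitrary this yields $\varphi = Lg \in \range{L}$.

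The main obstacle, and the step deserving the most care, is the well-definedness and boundedness of the functional $\ell$ on $\range{L\etoile}$: the inequality~(\ref{eq:inegalite.1}) is exactly what guarantees both, so the crux is recognizing that it simultaneously kills ambiguity on the kernel direction and provides the operator-norm bound needed for the continuous extension. Once $\ell$ is in hand as a bounded functional on $H_1$, the Riesz step and the identification $\ps{L\etoile\Psi}{g}=\ps{\Psi}{Lg}$ are routine. I would be mildly careful that the extension-by-zero on $(\overline{\range{L\etoile}})^{\perp}$ does not affect the identity, which is immediate since both sides only ever probe $\Psi$ through $L\etoile\Psi$.
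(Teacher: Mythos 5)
Your proof is correct, and it is essentially the canonical argument: the paper itself gives no proof of this lemma, deferring entirely to the citation \cite[Lemma 2.1]{art.nachman.07}, where the same strategy is used. Both directions are handled properly --- the forward implication via the adjoint identity and Cauchy--Schwarz, and the converse via the well-definedness and boundedness of the functional $\ell(L\etoile\Psi) \pardef \ps{\Psi}{\varphi}$ on $\range{L\etoile}$, continuous extension to the closure, and the Riesz representation theorem --- and you correctly identify the well-definedness check (killing the ambiguity when $L\etoile\Psi_1 = L\etoile\Psi_2$) as the one step where inequality~(\ref{eq:inegalite.1}) is doing real work.
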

Hence, any form comparable to $\norme{L\etoile (\cdot)}$ can be used to characterize the range of the operator $L$.

\begin{corollary}\label{cor:image.1}
With $L$ as in lemma~\ref{lem:image}, let \(f : H_2\etoile \to \R\) be comparable to \(L\etoile\) in the sense that there exists \(c_1>0\) and \(c_2>0\) such that 
\begin{equation}\label{eq:equivalence}
	c_1\norme{L\etoile\Psi} \leqslant f(\Psi) \leqslant c_2\norme{L\etoile\Psi}, \quad \Psi \in H_2\etoile.
\end{equation}
    Then, \(\varphi \in \range{L}\) if and only if there exists \(c_3>0\) such that for all \(\Psi \in H_2\etoile\) 
    \begin{equation}\label{eq:inegalite.2}
    \abs{\ps{\Psi}{\varphi}} \leqslant c_3f(\Psi).
    \end{equation}
\end{corollary}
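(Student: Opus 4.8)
The plan is to obtain this statement as an immediate consequence of Lemma~\ref{lem:image}, using the comparability~(\ref{eq:equivalence}) as a bridge between the two quantities $\norme{L\etoile\Psi}$ and $f(\Psi)$. Lemma~\ref{lem:image} already provides the equivalence between $\varphi \in \range{L}$ and the existence of a constant controlling $\abs{\ps{\Psi}{\varphi}}$ by $\norme{L\etoile\Psi}$; since $f$ is squeezed between two positive multiples of $\norme{L\etoile\Psi}$, replacing $\norme{L\etoile\Psi}$ by $f(\Psi)$ only rescales the constant. I would therefore prove the two implications separately, each time substituting the appropriate half of the sandwich~(\ref{eq:equivalence}).

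For the forward implication, I would assume $\varphi \in \range{L}$ and invoke Lemma~\ref{lem:image} to obtain a constant $c>0$ with $\abs{\ps{\Psi}{\varphi}} \leqslant c\norme{L\etoile\Psi}$ for all $\Psi \in H_2\etoile$. I would then use the \emph{left} inequality of~(\ref{eq:equivalence}), rewritten as $\norme{L\etoile\Psi} \leqslant \frac{1}{c_1}f(\Psi)$, to chain
\[
\abs{\ps{\Psi}{\varphi}} \leqslant c\norme{L\etoile\Psi} \leqslant \frac{c}{c_1}f(\Psi),
\]
so that~(\ref{eq:inegalite.2}) holds with $c_3 \pardef c/c_1 > 0$.

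For the reverse implication, I would assume~(\ref{eq:inegalite.2}) for some $c_3>0$ and use the \emph{right} inequality of~(\ref{eq:equivalence}), namely $f(\Psi) \leqslant c_2\norme{L\etoile\Psi}$, to deduce
\[
\abs{\ps{\Psi}{\varphi}} \leqslant c_3 f(\Psi) \leqslant c_3 c_2\norme{L\etoile\Psi}.
\]
This is exactly~(\ref{eq:inegalite.1}) with the positive constant $c \pardef c_3 c_2$, and Lemma~\ref{lem:image} then yields $\varphi \in \range{L}$, completing the equivalence. There is no genuine obstacle here: the proof is a routine chaining of inequalities, and the only point requiring minor care is bookkeeping on which half of the comparability bound is used in each direction, together with checking that all the constants produced ($c/c_1$ and $c_3 c_2$) remain strictly positive, which is guaranteed since $c_1, c_2 > 0$ by hypothesis.
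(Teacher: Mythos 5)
Your proof is correct and matches the paper's approach: the paper simply states that the result is a straightforward combination of the characterization~(\ref{eq:inegalite.1}) from lemma~\ref{lem:image} with the comparability bounds~(\ref{eq:equivalence}), which is exactly the two-directional chaining of inequalities you carried out explicitly. Your version merely makes the paper's one-line argument precise, including the correct choice of constants $c/c_1$ and $c_3 c_2$.
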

\begin{proof}
The result~(\ref{eq:inegalite.2}) is a straightforward combination of the characterization~(\ref{eq:inegalite.1}) and~(\ref{eq:equivalence}).
\qed\end{proof}
Finally, from corollary~\ref{cor:image.1}, we deduce the following characterization based on an \textit{inf} criterion.
\begin{corollary}\label{cor:image.2}
    With $f$ as in corollary~\ref{cor:image.1}, then \(\varphi \in \range{L}\) if and only if \[0< \inf\left\{ f(\Psi),\ \Psi \in H_2\etoile\text{ and }{\ps{\Psi}{\varphi}}=1 \right\}.\]
\end{corollary}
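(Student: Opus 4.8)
The plan is to recognize the \emph{inf} criterion as a repackaging of the inequality characterizations already available, and to chain together three equivalences. Writing $m_f \pardef \inf\left\{f(\Psi):\ \Psi\in H_2\etoile,\ \ps{\Psi}{\varphi}=1\right\}$ and, for a homogeneous companion quantity, $m_L \pardef \inf\left\{\norme{L\etoile\Psi}:\ \Psi\in H_2\etoile,\ \ps{\Psi}{\varphi}=1\right\}$, I would establish $\varphi\in\range{L}\iff m_L>0\iff m_f>0$, the last assertion being exactly the condition $0<m_f$ of the statement. Throughout I may assume $\varphi\neq 0$: when $\varphi=0$ one has $0\in\range{L}$ trivially, while the constraint set is empty so that $m_f=+\infty>0$, and both sides hold.

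For the first equivalence $\varphi\in\range{L}\iff m_L>0$ I would invoke Lemma~\ref{lem:image}. If~(\ref{eq:inegalite.1}) holds with a constant $c$, then restricting it to those $\Psi$ obeying $\ps{\Psi}{\varphi}=1$ gives $\norme{L\etoile\Psi}\geqslant 1/c$, whence $m_L\geqslant 1/c>0$. Conversely, if $m_L>0$, then for any $\Psi$ with $\ps{\Psi}{\varphi}\neq 0$ I would rescale by the scalar $\alpha$ of modulus $1/\abs{\ps{\Psi}{\varphi}}$ that normalizes the pairing to $1$; since $\norme{L\etoile(\alpha\Psi)}=\abs{\alpha}\,\norme{L\etoile\Psi}$, the bound $m_L\leqslant\norme{L\etoile(\alpha\Psi)}$ rearranges to $\abs{\ps{\Psi}{\varphi}}\leqslant m_L\inv\norme{L\etoile\Psi}$ (the case $\ps{\Psi}{\varphi}=0$ being trivial), which is precisely~(\ref{eq:inegalite.1}) and hence, by Lemma~\ref{lem:image}, places $\varphi$ in $\range{L}$.

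The second equivalence $m_L>0\iff m_f>0$ is a direct consequence of the two-sided comparison~(\ref{eq:equivalence}). Since $c_1\norme{L\etoile\Psi}\leqslant f(\Psi)\leqslant c_2\norme{L\etoile\Psi}$ holds pointwise, taking the infimum over the common constraint set $\{\ps{\Psi}{\varphi}=1\}$ preserves the inequalities and yields $c_1 m_L\leqslant m_f\leqslant c_2 m_L$; with $c_1,c_2>0$ this gives the equivalence of positivity.

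The step requiring the most care, and the reason the argument cannot be run directly on $f$, is the reverse half of the first equivalence: the rescaling needs the absolute homogeneity $\norme{L\etoile(\alpha\Psi)}=\abs{\alpha}\,\norme{L\etoile\Psi}$, a property $f$ is not assumed to enjoy. This is exactly why I would introduce the homogeneous surrogate $m_L$ and only transfer to $f$ afterwards through the comparability~(\ref{eq:equivalence}), rather than attempting to normalize an arbitrary $\Psi$ inside $f$ itself. One could equally phrase the final conclusion through Corollary~\ref{cor:image.1} rather than Lemma~\ref{lem:image}, but routing through the norm $\norme{L\etoile\cdot}$ remains unavoidable for the same homogeneity reason.
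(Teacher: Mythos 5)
Your proof is correct, including the empty-constraint-set convention for $\varphi=0$, but it is organized differently from the paper's. The paper never forms your surrogate infimum $m_L$: it argues directly on $f$ via Corollary~\ref{cor:image.1} (which already packages the comparison~(\ref{eq:equivalence}) into the range test). For the positive direction it notes that any admissible $\Psi$ satisfies $1=\abs{\ps{\Psi}{\varphi}}\leqslant c_3 f(\Psi)$, and for the other direction it proceeds by contraposition, taking for each $c>0$ a $\Psi_c$ violating the inequality, normalizing $\psi_c\pardef\Psi_c/\ps{\Psi_c}{\varphi}$, and showing $f(\psi_c)\leqslant c_2/(c_1 c)\to 0$ --- so the non-homogeneity of $f$ is absorbed on the spot by using both constants $c_1,c_2$ at once, rather than deferred to a final transfer step. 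The two arguments share the same core (the rescaling $\Psi\mapsto\Psi/\ps{\Psi}{\varphi}$ plus the two-sided comparability), so your closing claim that one \emph{must} route through the norm is slightly overstated: the paper runs the normalization on $f$ itself, with the homogeneity of $\norme{L\etoile\cdot}$ entering only inside the chain of inequalities. What your modular version buys is the explicit quantitative sandwich $c_1 m_L\leqslant m_f\leqslant c_2 m_L$ and a clean isolation of exactly where homogeneity is needed; what the paper's version buys is brevity (Lemma~\ref{lem:image} is invoked only through Corollary~\ref{cor:image.1}) and the slightly stronger conclusion that when $\varphi\notin\range{L}$ the infimum is attained in the limit, i.e.\ it equals $0$ along an explicit family $(\psi_c)$, not merely that it fails to be positive.
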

\begin{proof}
First, if \(\varphi \not\in \range{L}\), from corollary~\ref{cor:image.1}, for each $c>0$ there exists $\Psi_c \in H_2\etoile$ such that \(\abs{\ps{\Psi_c}{\varphi}} > cf(\Psi_c).\)
Since \(f(\Psi_c) \geqslant 0\) by~(\ref{eq:equivalence}), then \({\ps{\Psi_c}{\varphi}} \not= 0\) and we can define \(\psi_c \pardef \Psi_c /{\ps{\Psi_c}{\varphi}}\).
From~(\ref{eq:equivalence}), it follows 
\[1 = \abs{\ps{\Psi_c}{\varphi}}\frac{1}{\abs{\ps{\Psi_c}{\varphi}}} > cf(\Psi_c)\frac{1}{\abs{\ps{\Psi_c}{\varphi}}} \geqslant cf(\psi_c)\frac{c_1}{c_2}.\]
So, there is a set of functions $\psi_c \in  H_2\etoile$, satisfying ${\ps{\psi_c}{\varphi}}=1$, such that $f(\psi_c) \to 0$ when $c \to \infty$.

Next, let \(\varphi \in H_2\setminus\{0\}\) be in the range of the operator $L$ and let \(\Psi \in H_2\etoile\) satisfy \(\ps{\Psi}{\varphi} = 1\).
Thus, from corollary~\ref{cor:image.1}, there exists \(c_3>0\) such that \( 1 \leqslant c_3f(\Psi)\) and the infimum  is not vanishing.
Finally, if $\varphi =0$, which is always in the range of $L$, the infimum is evaluated over an empty set and will then conventionally be given the value~$+\infty$.
\qed\end{proof}

\subsection{Construction of a well-suited objective function}

 As shown in corollary~\ref{cor:image.2}, the range of a linear bounded operator $L$ can be formulated as a usual constrained optimization problem without the exact knowledge of $L$.
Indeed, it suffices to find any form $f$ satisfying~(\ref{eq:equivalence}).
Hence, to get an explicit characterization of the domain $\Omega$ from theorem~\ref{thm:omega.1}, we look for a form $f$ satisfying 
\begin{equation} \label{eq:encadrement.1}
	c_1\norme{\mathbf{1}_{\Omega}C\etoile\Psi} \leqslant f(\Psi) \leqslant c_2\norme{\mathbf{1}_{\Omega}C\etoile\Psi}, \quad \Psi \in L^2(S^{d-1}).
\end{equation}
To achieve this, following~\cite{art.kirsch.00}, we consider the form \(f_{W} : L^2(S^{d-1}) \to \R_+\)  defined by
\begin{equation}\label{def:fW.1}
    f_{W}(\Psi) \pardef \abs{\ps{ W\Psi}{\Psi}_{L^2(S^{d-1})}}^{\frac{1}{2}},
\end{equation}
where $W$ is a measurements operator explicitly built from the available data.

A first guess for the operator $W$ would be to consider $(F_{n_1}-F_{n_0})$: the difference between the far-field operators corresponding respectively to the reference index $n_0$ and the actual index $n_1$.
But it has been shown in~\cite{art.grisel.12} that this subtraction does not yield some crucial factorization.
Thus, we have to restrain ourselves to full bi-static data ($u_{n_0}\I$ and $u_{n_1}\I$ are known over \(S^{d-1} \times S^{d-1}\)) and non-absorbing media ($n_0(x)$ and $n_1(x) \in \R$) so we can consider the operator $ W : L^2(S^{d-1}) \to L^2(S^{d-1})$ defined by
\[W \pardef \left(\mathrm{id}+2ik \abs{\gamma}^2 F_{n_0}\right) \left(F_{n_1}-F_{n_0}\right).\] 

Under these assumptions,  it has been shown that this measurements operator $W$ has a factorization of the form (see \cite[Corollary 4.7, Corollary 4.3]{art.grisel.12})
\begin{equation}\label{eq:factorisation.1}
	W=CAC\etoile,
\end{equation}
where the operator $A$ is an automorphism on $L^2(\Omega)$ defined by
\begin{equation}\label{def:A}
	A \pardef \mathbf{1}_\Omega k^2(n_1-n_0)\mathcal T_{n_1} \mathcal T_{n_0}\inv \mathbf{1}_\Omega.
\end{equation}
So, we have
\begin{equation}\label{eq:factorisation.2}
	f_W(\Psi) = \abs{\ps{CAC\etoile\Psi}{\Psi}_{L^2(S^{d-1})}}^{\frac{1}{2}} = \abs{\ps{AC\etoile\Psi}{C\etoile\Psi}_{L^2(\Omega)}}^{\frac{1}{2}},
\end{equation}
and as such, the inequalities~(\ref{eq:encadrement.1}) are then the continuity and the coercivity of the operator $A$, at least on the range of \(C\etoile\).
We are now going to show that this coercivity is related to the contrast between the reference index \(n_0\) and the actual values of \(n_1\), i.e. the defects should be clearly distinguished from the background.
We thus make the following geometrical assumption.

\begin{hyp}\label{hyp:coercivite}
    Assume that \(n_{0}\) and \(n_{1}\) are real valued and that either \((n_1-n_0)\) or \((n_0-n_1)\) is locally bounded from below:
    \begin{itemize}
    \item for any compact subset \(\omega\) included in \(\Omega\), there exists \(c>0\) such that \((n_1(z)-n_0(z)) \geqslant c\) for almost all \(z \in \omega\),
    \end{itemize}
    or
    \begin{itemize}
    \item for any compact subset \(\omega\) included in \(\Omega\), there exists \(c>0\) such that \((n_0(z)-n_1(z)) \geqslant c\) for almost all \(z \in \omega\).
    \end{itemize}
\end{hyp}

Moreover, for a fixed geometry of defects, some wave numbers $k$ may produce resonances that cancel the outgoing wave.
Indeed, the operator  mapping an incoming wave to the corresponding far-field pattern is not one-to-one in the case of inhomogeneous media.
This corresponds to the so-called interior transmission eigenvalues arise~\cite{art.kirsch.99}.

\begin{definition}\label{def:transmission}
We call \(k\) an \emph{interior transmission eigenvalue for the pair of indices \((n_0,n_1)\)} if there exists a non-vanishing (source,solution) pair, denoted by  \((h,u)\in \left(L^2(\Omega)\right)^2\), such that
    \[\left\{
    \begin{aligned}
        (\Delta+k^2n_0)u&=-k^2(n_1-n_0)h\text{ in }\Omega,\\
        (\Delta+k^2n_1)h&=0\text{ in }\Omega,\\
        u&=0\text{ on }\partial\Omega,\\
        \dn u&=0\text{ on }\partial\Omega.\\
    \end{aligned}
    \right.\]
\end{definition}
Since we want to avoid these "{pathological}" values, it is useful to know that this is a rare case.

\begin{lemma}
If the indices \(n_0\) and \(n_1\) are real-valued, then the set of interior transmission eigenvalue for the pair of indices \((n_0,n_1)\) is discrete.
Furthermore, if there are infinitely many, they only accumulate at \(+\infty\).
\end{lemma}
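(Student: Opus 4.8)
The plan is to turn the interior-transmission system of Definition~\ref{def:transmission} into a single fourth-order problem posed on $H^2_0(\Omega)$, and then to read off discreteness from the analytic Fredholm theory. The real-valuedness of the indices, reinforced by the contrast furnished by Assumption~\ref{hyp:coercivite}, is what makes $\frac{1}{n_1-n_0}$ a bounded, sign-definite weight, and this is exactly the ingredient that renders the leading part of the associated form coercive.

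First I would eliminate $h$. The boundary conditions $u = \partial_\nu u = 0$ place $u$ in $H^2_0(\Omega)$, while the first two equations give $h = -\frac{1}{k^2(n_1-n_0)}(\Delta + k^2 n_0)u$; inserting this into $(\Delta + k^2 n_1)h = 0$ leaves
$$(\Delta + k^2 n_1)\frac{1}{n_1-n_0}(\Delta + k^2 n_0)u = 0, \qquad u \in H^2_0(\Omega).$$
Testing against $\psi \in H^2_0(\Omega)$ and integrating by parts twice — the boundary contributions vanish because both $u$ and $\psi$ have zero Cauchy data — yields, with $\lambda = k^2$, the sesquilinear form
$$\mathcal A_\lambda(u,\psi) = \int_\Omega \frac{1}{n_1-n_0}(\Delta u + \lambda n_0 u)\,\overline{(\Delta\psi + \lambda n_1 \psi)}\,dx,$$
whose nontrivial kernel elements are precisely the eigenfunctions; thus $k$ is a transmission eigenvalue if and only if $\lambda = k^2$ is a characteristic value of $\mathcal A_\lambda$.

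Next I would split $\mathcal A_\lambda = \mathcal A_0 + \mathcal L_\lambda$, where $\mathcal A_0(u,\psi) = \int_\Omega \frac{1}{n_1-n_0}\Delta u\,\overline{\Delta\psi}\,dx$ carries all the second derivatives and $\mathcal L_\lambda$ collects the terms proportional to $\lambda$ and $\lambda^2$, each of which involves at least one of $u,\psi$ undifferentiated twice. Under Assumption~\ref{hyp:coercivite} the weight $\frac{1}{n_1-n_0}$ is bounded and of one sign, so $\mathcal A_0$ is coercive on $H^2_0(\Omega)$ — here one uses that $\|\Delta\cdot\|_{L^2}$ is an equivalent norm on $H^2_0(\Omega)$ — and the Lax–Milgram theorem represents it by a boundedly invertible operator $\mathcal B$. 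Representing $\mathcal L_\lambda$ through Riesz by $\mathcal K(\lambda)$, each summand pairs one of its arguments only in $L^2$, so by Rellich's compact embedding $H^2_0(\Omega)\hookrightarrow L^2(\Omega)$ the operator $\mathcal K(\lambda)$ is compact; moreover $\lambda\mapsto\mathcal K(\lambda)$ is polynomial, hence entire. The eigenvalue condition becomes $(I+\mathcal B\inv\mathcal K(\lambda))u = 0$ with $\mathcal B\inv\mathcal K(\lambda)$ compact and analytic on all of $\C$.

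By the analytic Fredholm theorem, $I+\mathcal B\inv\mathcal K(\lambda)$ is either invertible for no $\lambda$ or invertible for every $\lambda\in\C$ outside a discrete set without finite accumulation point. The first alternative is excluded at once, since $\mathcal K(0)=0$ and hence $\lambda=0$ is a regular value. Therefore the transmission eigenvalues $\lambda=k^2$ form a discrete subset of $\C$ that can accumulate only at infinity; transported back through $k\mapsto k^2$, the real eigenvalues are discrete and accumulate only at $+\infty$, as claimed. The step I expect to be most delicate is the coercivity of $\mathcal A_0$: it rests on $\frac{1}{n_1-n_0}$ remaining bounded and sign-definite up to $\partial\Omega$ — precisely what the contrast hypothesis is for — together with the fourth-order elliptic estimate that makes $\|\Delta\cdot\|_{L^2}$ control the full $H^2$-norm on $H^2_0(\Omega)$.
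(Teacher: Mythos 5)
Your proposal follows precisely the route that the paper itself delegates to \cite[Theorems 4.13 and 4.14]{book.kirsch.08} (the paper's ``proof'' is only that citation): eliminate $h$, pass to the fourth-order problem in $H^2_0(\Omega)$, split the form into a coercive principal part plus a polynomial-in-$\lambda$ family of compact perturbations, and invoke the analytic Fredholm theorem, excluding the first alternative at $\lambda=0$. Most of the steps you sketch are sound and standard: the reduction to $H^2_0(\Omega)$ (via extension of $u$ by zero, using that $u\in L^2$, $\Delta u\in L^2$ and the Cauchy data vanish), the compactness of each term of $\mathcal L_\lambda$ through the Rellich embedding, the equivalence of $\norme{\Delta\cdot}_{L^2}$ with the $H^2$-norm on $H^2_0(\Omega)$, and the passage from discreteness in $\lambda=k^2$ to accumulation of the $k$'s only at $+\infty$.

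There is, however, one concrete gap: your claim that Assumption~\ref{hyp:coercivite} makes $\frac{1}{n_1-n_0}$ bounded ``up to $\partial\Omega$''. That assumption bounds the contrast from below only on \emph{compact} subsets of $\Omega$; it is deliberately local and allows $n_1-n_0\to 0$ at $\partial\Omega$ (as happens, e.g., for a contrast continuous on $\R^d$ and supported in $\overline\Omega$). In that case $\frac{1}{n_1-n_0}\notin L^\infty(\Omega)$, the form $\mathcal A_\lambda$ need not even be bounded on $H^2_0(\Omega)$, and neither Lax--Milgram for $\mathcal A_0$ nor the Riesz representation of $\mathcal L_\lambda$ is available, so the Fredholm reduction breaks down as written. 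What your argument actually requires is a uniform bound $n_1-n_0\geqslant c>0$ a.e.\ on $\Omega$ (or its negative), which is the hypothesis under which the cited theorems operate; since the lemma as stated assumes only real-valuedness, some such strengthening is implicit in the paper's ``adapting the notations'' as well, so this is a flaw you share with the paper rather than a wrong turn of your own. A smaller correction of emphasis: the delicate point is not quite where you locate it. Coercivity of $\mathcal A_0$ costs only the sign condition together with the \emph{upper} bound on the contrast, since $\frac{1}{n_1-n_0}\geqslant \norme{n_1-n_0}_{L^\infty}\inv$ wherever the contrast is positive; it is the boundedness (indeed the well-definedness) of the forms that consumes the uniform \emph{lower} bound on $n_1-n_0$.
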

\begin{proof}
    The proof follows exactly the lines of~\cite[Theorems 4.13 and 4.14]{book.kirsch.08}, by adapting the notations.
\qed\end{proof}
With these two geometrical restrictions, the coercivity of the operator $A$ on the range of~$C\etoile$ is then obtained in lemma~\ref{lem:coercivite.2}, using the following result.

\begin{lemma}\label{lem:coercivite.1}\cite[Lemma 1.17]{book.kirsch.08} 
Let $Y$ be a subset of  a reflexive Banach space $X$ and $A$, $A_0$ : $X\etoile \to X$ be linear and bounded operators such that
\begin{enumerate}
\item\label{hyp:coercivite.1} $\ps{\varphi}{A\varphi} \in \C \setminus (-\infty,0]$ for all $\varphi \neq 0$ in the closure of $Y$
\item\label{hyp:coercivite.2} $\ps{\varphi}{A_0\varphi} \in \R$ and there exists $c_0 >0$ with $\ps{\varphi}{A_0\varphi} \geqslant c_0 \norme{\varphi}$ for all $\varphi$ in~$Y$
\item\label{hyp:coercivite.3} $A-A_0$ is compact.
\end{enumerate}
Then, there exists $c>0$ such that for all $\varphi \in Y$ it holds
$\abs{\ps{A\varphi}{\varphi}} \geqslant c \norme{\varphi}^2_{X\etoile}.$
\end{lemma}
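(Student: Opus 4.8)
The plan is to argue by contradiction through a weak-compactness scheme, which is the classical route for coercivity estimates of this kind. Since both sides of the claimed inequality are homogeneous of degree two in $\varphi$, it suffices to exhibit a constant $c>0$ with $\abs{\ps{A\varphi}{\varphi}} \geqslant c$ for every $\varphi \in Y$ normalized by $\norme{\varphi}_{X\etoile} = 1$. Assuming no such $c$ exists, I would select a sequence $(\varphi_n) \subset Y$ with $\norme{\varphi_n}_{X\etoile} = 1$ and $\abs{\ps{A\varphi_n}{\varphi_n}} \to 0$. Because $X$ is reflexive, so is $X\etoile$, and this bounded sequence has a weakly convergent subsequence, still written $(\varphi_n)$, with $\varphi_n \rightharpoonup \varphi$ for some $\varphi \in X\etoile$. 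Hypothesis~\ref{hyp:coercivite.3} enters decisively at this point: the compactness of $A-A_0$ turns weak convergence into strong convergence of $(A-A_0)\varphi_n$ towards $(A-A_0)\varphi$, so that $\ps{(A-A_0)\varphi_n}{\varphi_n} \to \ps{(A-A_0)\varphi}{\varphi}$.

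I would then split the pairing into real and imaginary parts using the decomposition $\ps{A\varphi_n}{\varphi_n} = \ps{A_0\varphi_n}{\varphi_n} + \ps{(A-A_0)\varphi_n}{\varphi_n}$ together with the fact, from hypothesis~\ref{hyp:coercivite.2}, that $\ps{\varphi_n}{A_0\varphi_n}$ is real. The imaginary part of the left-hand side tends to $0$, while that of the right-hand side tends to $\im\ps{(A-A_0)\varphi}{\varphi} = \im\ps{A\varphi}{\varphi}$, whence $\im\ps{A\varphi}{\varphi}=0$. For the real part, the coercivity in hypothesis~\ref{hyp:coercivite.2} makes $\varphi \mapsto \ps{\varphi}{A_0\varphi}$ behave like the square of an equivalent norm on $X\etoile$, hence weakly lower semicontinuous, giving $\ps{\varphi}{A_0\varphi} \leqslant \liminf_n \ps{\varphi_n}{A_0\varphi_n}$. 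Combined with $\re\ps{A\varphi_n}{\varphi_n} \to 0$, this yields $\re\ps{A\varphi}{\varphi} \leqslant 0$, so altogether $\ps{A\varphi}{\varphi} \in (-\infty,0]$.

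The conclusion follows from two cases. If $\varphi=0$, then $(A-A_0)\varphi_n \to 0$ strongly, so $\ps{(A-A_0)\varphi_n}{\varphi_n}\to 0$ and the decomposition forces $\ps{\varphi_n}{A_0\varphi_n}\to 0$; but hypothesis~\ref{hyp:coercivite.2} bounds this quantity below by $c_0>0$, a contradiction. If $\varphi\neq 0$, then $\varphi$ belongs to the weak closure of $Y$, and the property $\ps{A\varphi}{\varphi}\in(-\infty,0]$ contradicts hypothesis~\ref{hyp:coercivite.1}. Either way the assumption collapses, which proves the estimate.

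I expect the two genuinely delicate points to be the following. First, the weak lower semicontinuity of the $A_0$-quadratic form: in a Hilbert space this is immediate because $\sqrt{\ps{\varphi}{A_0\varphi}}$ is literally an equivalent norm, but in the reflexive Banach framework one must extract it from the symmetry and coercivity encoded in hypothesis~\ref{hyp:coercivite.2}. Second, one must justify that the weak limit $\varphi$ lies in the \emph{closure} of $Y$ appearing in hypothesis~\ref{hyp:coercivite.1}; this is where it matters that in the intended application $Y$ is a linear subspace, so that by Mazur's theorem its weak and strong closures coincide and the weak limit is admissible in hypothesis~\ref{hyp:coercivite.1}.
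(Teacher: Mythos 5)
Your proof is correct, and it coincides with the paper's source for this statement: the paper offers no proof of its own, quoting the lemma directly from \cite[Lemma 1.17]{book.kirsch.08}, whose proof is exactly your scheme — contradiction with a normalized sequence $\norme{\varphi_n}=1$, $\abs{\ps{A\varphi_n}{\varphi_n}}\to 0$, weak extraction $\varphi_n \rightharpoonup \varphi$ by reflexivity, strong convergence of $(A-A_0)\varphi_n$ by compactness, and the dichotomy $\varphi=0$ (contradicting the coercivity of $A_0$) versus $\varphi\neq 0$ (contradicting $\ps{\varphi}{A\varphi}\in\C\setminus(-\infty,0]$). The two delicate points you flag are genuine and close exactly as you indicate: the lower-semicontinuity step is carried out in the reference not by invoking an equivalent norm but by expanding $0\leqslant \ps{\varphi_n-\varphi}{A_0(\varphi_n-\varphi)}$, where real-valuedness of the quadratic form on the subspace yields Hermitian symmetry by polarization and the cross terms $\ps{\varphi_n}{A_0\varphi}$ converge since $A_0\varphi$ is a fixed element of $X$ paired against $\varphi_n\rightharpoonup\varphi$; and taking $Y$ to be a linear subspace (as in the application, $Y=\range{C\etoile}$) both legitimizes your normalization staying inside $Y$ and, via Mazur's theorem, places the weak limit in the closure of $Y$ required by the first hypothesis.
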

Finally, we group the properties of operator $A$ in the following lemma.

\begin{lemma}\label{lem:coercivite.2}
Under assumption~\ref{hyp:coercivite}, if $k$ is not an interior transmission eigenvalue in the sense of definition~\ref{def:transmission},
then the operator $A$, defined by~(\ref{def:A}), is coercive on the range of $C\etoile$, defined in theorem~\ref{thm:omega.1}.
Namely, there exists $c_1>0$ such that  
\begin{equation}\label{eq:encadrement.2}
c_1\norme{\mathbf{1}_{\Omega}C\etoile\Psi}_{L^2(\Omega)} \leqslant \abs{\ps{AC\etoile\Psi}{C\etoile\Psi}_{L^2(\Omega)}}^{\frac{1}{2}},\ \forall \Psi \in L^2(S^{d-1}).
\end{equation}
\end{lemma}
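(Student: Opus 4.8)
The plan is to deduce the coercivity estimate~\eqref{eq:encadrement.2} from the abstract result Lemma~\ref{lem:coercivite.1}, applied with $X=L^2(\Omega)$, with $Y=\range{\mathbf{1}_\Omega C\etoile}$ the set on which coercivity is sought, and with a comparison operator $A_0$ extracted from the contrast. By Assumption~\ref{hyp:coercivite}, up to exchanging the roles of $n_0$ and $n_1$, I may assume the contrast $\tau\pardef k^2(n_1-n_0)$ is nonnegative on $\Omega$ and bounded from below on every compact subset; I then set $A_0\pardef\mathbf{1}_\Omega\tau\mathbf{1}_\Omega$, the multiplication by $\tau$. The whole argument rests on the splitting $A=A_0+(A-A_0)$ obtained by writing $\mathcal T_{n_1}\mathcal T_{n_0}\inv=\mathrm{id}+R$, where $R$ sends the reference total field $u_{n_0}$ to the difference $u_{n_1}-u_{n_0}$ of the two total fields sharing the same (Herglotz) incident wave.

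First I would verify hypothesis~\ref{hyp:coercivite.3}, the compactness of $A-A_0$. Identifying $\varphi\in L^2(\Omega)$ with the restriction of $u_{n_0}$ to $\Omega$, the field $w\pardef R\varphi=u_{n_1}-u_{n_0}$ solves $\Delta w+k^2 n_1 w=-\tau\,u_{n_0}$ with a source supported in $\Omega$; interior elliptic regularity places $w$ in $H^2$ locally, and the compact embedding $H^2\hookrightarrow L^2$ shows that $R$, hence $A-A_0=\mathbf{1}_\Omega\tau R\mathbf{1}_\Omega$, is compact on $L^2(\Omega)$. Next, hypothesis~\ref{hyp:coercivite.2}, the coercivity of $A_0$ on $Y$, rests on the sign of the contrast: the form $\ps{\varphi}{A_0\varphi}_{L^2(\Omega)}=\int_\Omega\tau\,\abs{\varphi}^2$ is real and nonnegative, and the local lower bound of Assumption~\ref{hyp:coercivite} combined with the interior regularity of the Helmholtz solutions constituting $Y$ (which prevents their mass from concentrating at $\partial\Omega$) yields the required lower bound; this is the point where the \emph{locally} bounded-from-below form of the assumption is used to absorb a possible degeneracy of $\tau$ near $\partial\Omega$.

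The delicate point, which I expect to be the main obstacle, is hypothesis~\ref{hyp:coercivite.1}: that $\ps{\varphi}{A\varphi}\notin(-\infty,0]$ for every $\varphi\neq 0$ in $\overline Y$. Here both structural assumptions are consumed. Because $n_0$ and $n_1$ are real (non-absorbing medium), a conservation-of-energy identity shows that $\im\ps{\varphi}{A\varphi}_{L^2(\Omega)}$ is a positive multiple of the squared $L^2(S^{d-1})$ norm of the far-field pattern of the radiating field $u_{n_1}-u_{n_0}$; in particular it is nonnegative, and $\ps{\varphi}{A\varphi}$ can lie in $(-\infty,0]$ only if that far-field pattern vanishes. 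I would then argue by contradiction: a vanishing far field forces, through Rellich's lemma and unique continuation, the identity $u_{n_1}-u_{n_0}=0$ outside $\Omega$, hence zero Cauchy data $u=\dn u=0$ on $\partial\Omega$ for $u\pardef u_{n_1}-u_{n_0}$. Since $u$ and $h\pardef u_{n_1}$ already satisfy the interior equations $(\Delta+k^2 n_0)u=-k^2(n_1-n_0)h$ and $(\Delta+k^2 n_1)h=0$ in $\Omega$, the pair $(h,u)$ --- non-trivial as soon as $\varphi\neq 0$ --- would be an interior transmission eigenpair at $k$, contradicting the hypothesis on $k$ in Definition~\ref{def:transmission}.

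With the three hypotheses established, Lemma~\ref{lem:coercivite.1} provides a constant $c>0$ such that $\abs{\ps{A\varphi}{\varphi}_{L^2(\Omega)}}\geqslant c\,\norme{\varphi}_{L^2(\Omega)}^2$ for all $\varphi\in Y$. Specializing to $\varphi=\mathbf{1}_\Omega C\etoile\Psi$ and setting $c_1\pardef\sqrt c$ yields exactly~\eqref{eq:encadrement.2}. The analytic heart of the proof is thus the passage from ``$\ps{\varphi}{A\varphi}\in(-\infty,0]$'' to a genuine transmission eigenpair, together with the extension of the energy identity and of the coercivity of $A_0$ from $Y$ to its closure $\overline Y$, where a weak-limit argument must be controlled.
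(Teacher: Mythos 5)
Your proposal follows the paper's proof in all essentials: the same application of Lemma~\ref{lem:coercivite.1} with $X=L^2(\Omega)$ and $Y$ the range of (the restriction to $\Omega$ of) $C\etoile$, the same comparison operator $A_0$ given by multiplication by the contrast, and, for hypothesis~\ref{hyp:coercivite.1}, the identical chain: the energy identity gives $\im\ps{A\varphi}{\varphi}=k\abs{\gamma}^2\norme{u_{n_1}\I-u_{n_0}\I}^2_{L^2(S^{d-1})}\geqslant 0$; if it vanishes, Rellich and unique continuation force zero Cauchy data for $u_{n_1}-u_{n_0}$ on $\partial\Omega$, and the pair $(u_{n_1},\,u_{n_1}-u_{n_0})$ is a transmission eigenpair contradicting the hypothesis on $k$ unless $\varphi=0$. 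Your compactness argument for hypothesis~\ref{hyp:coercivite.3} (interior elliptic regularity of the radiating difference $w=R\varphi$, then the compact embedding of $H^2$ into $L^2$) is a valid, essentially equivalent alternative to the paper's route through the Lippmann--Schwinger identity $\mathcal T_n=\mathrm{id}-T\mathcal T_n$, and you rightly flag the extension to $\overline{Y}$, which the paper settles by continuity of both sides of the energy identity.

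The one step that would fail as written is your justification of hypothesis~\ref{hyp:coercivite.2}. You read Assumption~\ref{hyp:coercivite} as permitting $\tau\pardef k^2(n_1-n_0)$ to degenerate near $\partial\Omega$ and claim that interior regularity of the Helmholtz solutions constituting $Y$ prevents their $L^2$ mass from concentrating at $\partial\Omega$. That is false: interior estimates control $\varphi$ only on compact subsets of $\Omega$ in terms of $\norme{\varphi}_{L^2(\Omega)}$, and concentration does occur --- already for a constant index, the evanescent plane waves $e^{i\xi\cdot x}$ with $\xi\cdot\xi=k^2$ and $\abs{\im\xi}$ large localize exponentially near a hyperplane, and by density of Herglotz-type fields such behaviour is approximated within $Y$; so if $\tau$ really vanished at $\partial\Omega$, the infimum of $\int_\Omega\tau\abs{\varphi}^2$ over unit-norm $\varphi\in Y$ could be $0$ and $A_0$ would not be coercive. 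The gap is nevertheless harmless, for a reason neither you nor, explicitly, the paper states: $\Omega=\mathrm{support}(n_1-n_0)$ is closed and contained in the bounded set $D$, hence compact, so Assumption~\ref{hyp:coercivite} applied with $\omega=\Omega$ already yields a uniform bound $\abs{\tau}\geqslant k^2c>0$ a.e.\ on $\Omega$, and $A_0$ is then coercive on all of $L^2(\Omega)$ --- this is what the paper's ``clearly coercive'' means. Relatedly, your reduction ``up to exchanging the roles of $n_0$ and $n_1$'' is not licit, since $A$ is built asymmetrically from $\mathcal T_{n_1}\mathcal T_{n_0}\inv$ and is fixed by the data; in the second branch of the assumption one should instead apply Lemma~\ref{lem:coercivite.1} to $-A$ with $A_0=k^2(n_0-n_1)\,\mathrm{id}$, which is legitimate because the energy identity makes $\im\ps{A\varphi}{\varphi}$ strictly positive for $\varphi\neq0$ when $k$ is not a transmission eigenvalue, so $\ps{\varphi}{-A\varphi}$ also avoids $(-\infty,0]$, and the conclusion $\abs{\ps{A\varphi}{\varphi}}\geqslant c\norme{\varphi}^2$ is unchanged by the sign flip.
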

\begin{proof}
This is a straightforward application of lemma~\ref{lem:coercivite.1} with $Y=\range{C\etoile}$ and $X=L^2(\Omega)$.
The required assumptions have been partially shown~\cite[lemma 5.3]{art.grisel.12} but, for convenience, we give a complete proof.
 
\begin{enumerate}   
\item 
 Choose \(\varphi \in \range{C\etoile}\).
    By definition of operator $C$, this is a total field for the refraction index \(n_0\). 
    Hence, there exists an incident field \(f\) such that \(\varphi = \mathcal{T}_{n_0}(f)\).
    Let us set \(u_{n_0} = \varphi\) and \(u_{n_1} = \mathcal{T}_{n_1}(f)\).
    Thus, we obtain \(A\varphi=k^{2}(n_1-n_0)u_{n_1}\).   
    Moreover, choosing \(R>0\) such that the ball \(B_R\) of radius \(R\) contains \(\Omega\), it holds that
    \begin{align*}
        \lefteqn%
        {\int_{\Omega}k^{2}(n_1-n_0)u_{n_1}(\overline{u_{n_0}-u_{n_1}})}\\&
        =\int_{B_R}(\Delta+k^2n_0)(u_{n_1}-u_{n_0})(\overline{u_{n_1}-u_{n_0}})\\
        &=\int_{B_R}k^2n_0|u_{n_1}-u_{n_0}|^2 - |\grad(u_{n_1}-u_{n_0})|^2
            +\int_{S_R}(\overline{u_{n_1}-u_{n_0}})\dn (u_{n_1}-u_{n_0}).
        \end{align*}
        By letting \(R\) go to infinity, it comes
        \begin{align*}
        \lefteqn%
         {\int_{\Omega}k^{2}(n_1-n_0)u_{n_1}(\overline{u_{n_0}-u_{n_1}})}\\&
        =\int_{\R^n}k^2n_0|u_{n_1}-u_{n_0}|^2 - |\grad(u_{n_1}-u_{n_0})|^2
            +ik\abs{\gamma}^2\int_{S^{d-1}}|u_{n_1}\I-u_{n_0}\I|^2.
    \end{align*}
    Hence, taking the imaginary part yields
    \[Im \int_{\Omega}k^{2}(n_1-n_0)u_{n_1}\overline{u_{n_0}}=k\abs{\gamma}^2\int_{S^{d-1}}|u_{n_1}\I-u_{n_0}\I|^2.\]
    This shows that \(Im \langle A\varphi,\varphi\rangle \geqslant 0\) and if this quantity is vanishing, we deduce that \(u_{n_1}\I=u_{n_0}\I\).
        Moreover, out of $\Omega$ we have \[(\Delta+k^2n_0)u_{n_0} = (\Delta+k^2n_1)u_{n_1} = (\Delta+k^2n_0)u_{n_1}.\]
        As a consequence, the unique continuation principle \cite[theorem 8.6]{book.colton.1} yields \(u_{n_1}=u_{n_0}\) out of \(\Omega\).
    So, the quantity $w \pardef (u_{n_1}-u_{n_0})$ has its support included in \(\Omega\) and satisfies \((\Delta+k^2n_0)w=-(n_1-n_0)u_{n_1}\).
    If \(k\) is not a transmission eigenvalue, we then have \(w=0\) and  $u_{n_1}\rest{\Omega}=0$.
    Finally, this implies that  $u_{n_1}=0$ by the unique continuation principle and all these results are extended to \(\overline{\range{C\etoile}}\) by continuity to prove item~\ref{hyp:coercivite.1}.
\item
  Furthermore, we also have that
    \begin{align*}
        \langle A\varphi,\varphi\rangle
            &=\int_{\Omega}k^{2}(n_1-n_0)|u_{n_0}|^2+\int_{\Omega}k^{2}(n_1-n_0)(u_{n_1}-u_{n_0})\overline{u_{n_0}}\\
            &=\langle A_0\varphi,\varphi \rangle + \langle K\varphi,\varphi \rangle,
    \end{align*}
    with \(A_0=k^{2}(n_1-n_0)I\) and \(K=k^{2}(n_1-n_0)(\mathcal{T}_{n_1}\mathcal{T}_{n_0}\inv-I)\).
    Under assumption~\ref{hyp:coercivite}, \(A_0\) is clearly coercive and self-adjoint.
    \item
    Moreover, \(K=k^{2}(n_1-n_0)(\mathcal{T}_{n_1}-\mathcal{T}_{n_0})\mathcal{T}_{n_0}\inv\).
    Yet, it is known from the Lippmann-Schwinger equation~\cite[equation (8.12)]{book.colton.1} that $\mathcal{T}_{n} = id - T\mathcal{T}_{n}$, where $T$ is some compact operator.
    Thus, $(\mathcal{T}_{n_1}-\mathcal{T}_{n_0})$ is compact, and so is $K$.
\end{enumerate}
\qed\end{proof}

\subsection{Characterization of the defects from the measurements}

We are now able to state an explicit localization of the defects in the form of a constrained optimization problem that extends the characterization proposed in~\cite{art.kirsch.00}.

\begin{theorem}\label{thm:minimisation}
Assume that $k$ is not an interior transmission eigenvalue for the indices \(n_0\) et \(n_1\), following  definition~\ref{def:transmission}, 
that these indices are contrasted following assumption~\ref{hyp:coercivite}
and that we have full bi-static data (i.e. $u_{n_0}\I$ and $u_{n_1}\I$ are known over \(S^{d-1} \times S^{d-1}\)).

We can then define the value
    \begin{equation}\label{eq:I.1}
        \mathcal M_{W}(z) \pardef \inf\left\{ f_{W}(\Psi),\ \Psi \in L^2(S^{d-1}) \text{ and } {\ps{\Psi}{\overline{u_{n_0}(\cdot,z)}}}_{L^2(S^{d-1})} = 1\right\},
    \end{equation}
    and for each point \(z \in \R^d\) we have
    \begin{equation}\label{eq:minimisation}
        z \in \Omega \iff \mathcal M_{W}(z) >0.
    \end{equation}
\end{theorem}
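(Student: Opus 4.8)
The plan is to read~(\ref{eq:minimisation}) as the composition of two equivalences that are already available: the implicit characterization of Theorem~\ref{thm:omega.1} and the \textit{inf}-criterion of Corollary~\ref{cor:image.2}. Concretely, I would apply the corollaries of the first subsection to the bounded operator $L \pardef C\mathbf{1}_\Omega : L^2(\Omega) \to L^2(S^{d-1})$, whose adjoint is $L\etoile = \mathbf{1}_\Omega C\etoile$, taking $f_W$ as the comparison function and $\varphi \pardef \overline{u_{n_0}(\cdot,z)}$ as the test function. The whole proof then reduces to one verification: that $f_W$ is comparable to $\norme{\mathbf{1}_\Omega C\etoile\Psi}$ in the sense of~(\ref{eq:equivalence}), i.e. that the two-sided bound~(\ref{eq:encadrement.1}) holds.

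I would establish the two inequalities of~(\ref{eq:encadrement.1}) separately. The lower bound $c_1\norme{\mathbf{1}_\Omega C\etoile\Psi} \leqslant f_W(\Psi)$ is exactly the coercivity estimate~(\ref{eq:encadrement.2}) of Lemma~\ref{lem:coercivite.2}, which is valid precisely because $k$ is not an interior transmission eigenvalue and the indices are contrasted (Assumption~\ref{hyp:coercivite}); combined with the factorization identity~(\ref{eq:factorisation.2}) it is literally the lower bound for $f_W$. The upper bound follows from the mere boundedness of $A$ on $L^2(\Omega)$: using~(\ref{eq:factorisation.2}) and the Cauchy--Schwarz inequality,
\[
f_W(\Psi) = \abs{\ps{AC\etoile\Psi}{C\etoile\Psi}_{L^2(\Omega)}}^{\frac{1}{2}} \leqslant \norme{A}^{\frac{1}{2}}\,\norme{\mathbf{1}_\Omega C\etoile\Psi}_{L^2(\Omega)},
\]
so that~(\ref{eq:encadrement.1}) holds with $c_2 = \norme{A}^{1/2}$. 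Note that $f_W$ takes values in $\R_+$ by construction, so it meets the hypotheses of Corollary~\ref{cor:image.1}.

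With~(\ref{eq:encadrement.1}) in hand, $f_W$ satisfies assumption~(\ref{eq:equivalence}) of Corollary~\ref{cor:image.1} for $L = C\mathbf{1}_\Omega$, and Corollary~\ref{cor:image.2} applied with $\varphi = \overline{u_{n_0}(\cdot,z)}$ gives
\[
\overline{u_{n_0}(\cdot,z)} \in \range{C\mathbf{1}_\Omega} \iff 0 < \mathcal M_W(z),
\]
since the infimum defining $\mathcal M_W(z)$ in~(\ref{eq:I.1}), under the constraint $\ps{\Psi}{\overline{u_{n_0}(\cdot,z)}} = 1$, is exactly the infimum of Corollary~\ref{cor:image.2}. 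Chaining this equivalence with the implicit characterization~(\ref{eq:omega.1}) of Theorem~\ref{thm:omega.1} yields $z \in \Omega \iff \mathcal M_W(z) > 0$, which is~(\ref{eq:minimisation}).

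The genuine analytic difficulty---the coercivity of $A$ on $\range{C\etoile}$, resting on the imaginary-part identity and the unique continuation argument---is already packaged in Lemma~\ref{lem:coercivite.2}, so the present proof is essentially an assembly. The only point deserving care is the bookkeeping of the adjoint and restriction operators: one must check that the quantity $\norme{\mathbf{1}_\Omega C\etoile\Psi}_{L^2(\Omega)}$ appearing in~(\ref{eq:encadrement.2}) coincides with $\norme{L\etoile\Psi}$ for $L\etoile = \mathbf{1}_\Omega C\etoile$, so that the lower bound of~(\ref{eq:encadrement.1}) is~(\ref{eq:encadrement.2}) verbatim and the hypothesis~(\ref{eq:equivalence}) of Corollary~\ref{cor:image.1} is met exactly as stated.
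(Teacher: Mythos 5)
Your proposal is correct and follows exactly the paper's own argument: chain Theorem~\ref{thm:omega.1} with Corollary~\ref{cor:image.2} applied to $L = C\mathbf{1}_\Omega$, $f = f_W$ and $\varphi = \overline{u_{n_0}(\cdot,z)}$, reducing everything to the double inequality~(\ref{eq:encadrement.1}), whose lower bound is Lemma~\ref{lem:coercivite.2} and whose upper bound is the boundedness of $A$ via~(\ref{eq:factorisation.2}). Your only additions---the explicit Cauchy--Schwarz constant $c_2 = \norme{A}^{1/2}$ and the remark on the $\mathbf{1}_\Omega$ adjoint bookkeeping---merely make explicit what the paper leaves implicit.
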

\begin{proof}
Theorem~\ref{thm:omega.1} characterizes the domain of the defects $\Omega$ by
\[z \in \Omega \iff \overline{u_{n_0}(\cdot,z)} \in \range{C\mathbf{1}_{\Omega}}.\]
The result~(\ref{eq:minimisation}) is then a direct consequence of corollary~\ref{cor:image.2} with $f = f_W$, $\varphi = \overline{u_{n_0}(\cdot,z)}$, $L = C\mathbf{1}_\Omega$ and $H_2 = L^2(S^{d-1})$.
So what is left to prove is that the double inequality~(\ref{eq:encadrement.1}) holds.
Now, we deduce from~(\ref{eq:factorisation.2}) that the right inequality comes from the boundedness of the operator $A$. The left one was established in lemma~\ref{lem:coercivite.2}.
\qed\end{proof}

\section{Numerical methods for the computation of the infimum's value}\label{sec:iteratives}

In theorem~\ref{thm:minimisation}, we have expressed the localization of the defects as a pointwise binary test, taking the form of a constrained optimization problem.
We are now interested in the numerical computation of the values of the function $\mathcal M_{W}(z)$ defined in~(\ref{eq:I.1}).
We thus explicit, and then test, two usual minimization algorithms working on this problem: the steepest descent and the gradient projection.

\subsection{Algorithms}

Both minimization methods we are going to use require explicit gradients of the cost function.
To simplify their expression, we will consider the minimization of the following form on $L^2(S^{d-1})$, defined for any $\Psi$ by
\[f_W^4(\Psi) \pardef \big(f_W(\Psi) \big)^4 = \abs{\ps{W\Psi}{\Psi}_{L^2(S^{d-1})}}^2.\]
Since we only want to know if the infimum is vanishing, this gives results equivalent to~(\ref{eq:I.1}) and we thus have to evaluate
\begin{equation}\label{eq:I.2}
         \big(\mathcal M_W(z)\big)^4 \pardef \inf\left\{ f_W^4(\Psi),\ \Psi \in L^2(S^{d-1}) \text{ and }{\ps{\Psi}{\overline{u_{n_0}(\cdot,z)}}}_{L^2(S^{d-1})} = 1\right\}.
    \end{equation}
\begin{remark}
The function $\mathcal M_W(z)$ has been proved to  vanish outside of the defects and yet, it can be seen that the value 0 can never be attained while satisfying the constraint.
	
To show this, let $\Psi_\star \in L^2(S^{d-1})$  be such that \(f_W^4(\Psi_\star) = 0\).
Then, from  factorization~(\ref{eq:factorisation.1}) and inequalities~(\ref{eq:encadrement.2}), it holds that $ f_W^4(\Psi_\star) = \ps{AC\etoile\Psi_\star}{C\etoile\Psi_\star}^2 \geqslant \norme{\mathbf{1}_\Omega C\etoile\Psi_\star}^4$, that is $C\etoile{\Psi_\star}\rest{\Omega} =0$.
Furthermore, it is easy to see that $C\etoile{\Psi_\star}$ satisfies the Helmholtz equation~(\ref{eq:Helmholtz}) on $\R^d$ with $n = n_0$.
The unique continuation principle then yields $C\etoile{\Psi_\star} =0$, and thus ${\Psi_\star} = 0$ by injectivity of $C\etoile$, which has been shown in the proof of~\cite[Proposition 5.4]{art.grisel.12}.
As a consequence, $\Psi_\star$ can not satisfy ${\ps{\Psi_\star}{\overline{u_{n_0}(\cdot,z)}}} = 1$.

This points out  that  numerical approximations of a vanishing \textit{inf} might be mistaken with exact non-zero \textit{inf} values.
Some care will thus have to be taken to set them apart, so that the plot of $\mathcal M_W(z)$ can be used to localize the defects.
\end{remark}

We now turn to the feasible set.
For \(z \in \R^d\), let us denote by $\mathcal C_z$ the affine hyperplane
\begin{equation*}
        \mathcal{C}_{z} \pardef \left\{\Psi+\frac{\overline{u_{n_0}(\cdot,z)}}{\norme{u_{n_0}(\cdot,z)}^2},\  \Psi \in L^2(S^{d-1}),\  \ps{\Psi}{\overline{u_{n_0}(\cdot,z)}}_{L^2(S^{d-1})} = 0 \right\}.
\end{equation*}
The orthogonal projection on  $\mathcal C_z$, denoted by $P_{\mathcal C_z}$, is defined on \(L^2(S^{d-1})\) by the affine mapping
    \begin{equation}\label{eq:proj}
        P_{\mathcal{C}_z} \Psi \pardef \Psi-\frac{\ps{\Psi}{\overline{u_{n_0}(\cdot,z)}}}{\norme{u_{n_0}(\cdot,z)}^2}\overline{u_{n_0}(\cdot,z)} +  \frac{\overline{u_{n_0}(\cdot,z)}}{\norme{u_{n_0}(\cdot,z)}^2}.
    \end{equation}

Hence, finding an infimum of \(f_W^4\) over \(\mathcal{C}_z\) is equivalent to looking for the infimum over \(L^2(S^{d-1})\) of  
\begin{equation*}
    P_{f_{W}^4} \pardef f_{W}^4 \rond P_{\mathcal{C}_z}.
\end{equation*}
We can therefore compute a minimizing sequence $x_n$ for the form $P_{f_{W}^4} $ by any unconstrained optimization method.
For convenience, all gradients and hessians are calculated in section~\ref{sec:gradients} and their finite dimension formulation is given in section~\ref{sec:dimension.finie}.
It then follows from theorem \ref{thm:minimisation} that if \(\big(P_{f_{W}^4} (x_n)\big)_n\) goes to 0, the point \(z\) is outside~\(\Omega\), and inside otherwise.
A basic example of descent is presented in algorithm~\ref{algo:descent}:

\begin{algorithm}[H]\label{algo:descent}
\caption{Steepest descent}
\KwIn{\(x_0\) chosen in \(L^2(S^{d-1})\)}
\Repeat{\(\norme{x_{n+1} - x_n}/(1+\norme{x_n})<\varepsilon\)}{
Compute \(\alpha_n\) such that \(P_{f_{W}^4} (x_n-\alpha_n \grad P_{f_{W}^4} (x_n)) < P_{f_{W}^4} (x_n)\);\\
Update \(x_{n+1} \leftarrow x_n-\alpha_n \grad P_{f_{W}^4} (x_n)\);
}
\KwOut{\(P_{f_{W}^4} (x_{N})\)}
\end{algorithm}

Moreover, since the projection on $\mathcal C_z$ is easy to write, we can also consider a gradient projection method~\cite{art.rosen.60}.
As previously, if \(\big(f_{W}^4(x_n)\big)_n\) goes to 0, the point \(z\) is outside \(\Omega\).
The principle of the gradient projection is presented in algorithm~\ref{algo:gp}:

\begin{algorithm}[H]\label{algo:gp}
\caption{Gradient projection}
\KwIn{\(x_0\) chosen in \(\mathcal{C}_z\)}
\Repeat{\(\norme{x_{n+1} - x_n}/(1+\norme{x_n})<\varepsilon\)}{
Compute \(\alpha_n\) such that \(f_{W}^4(x_n-\alpha_n \grad f_{W}^4(x_n)) < f_{W}^4(x_n)\);\\
Project and update \(x_{n+1} \leftarrow P_{\mathcal{C}_z}(x_n-\alpha_n \grad f_{W}^4(x_n))\);
}
\KwOut{\(f_{W}^4(x_{N})\)}
\end{algorithm}

\begin{remark}\label{rem:descente.projection}
Since the projector \(P_{\mathcal{C}_z}\) is an affine map, the proposed steepest descent  and gradient projection methods are very close.
Indeed, we note that by choosing a constant descent step \(\alpha_n=\alpha\) and a starting point \(x_0 \in \mathcal{C}_z\), both algorithms define the same sequence.
The computation of $\alpha_n$ is however done before the projection in algorithm~\ref{algo:gp}, and after it in algorithm~\ref{algo:descent}.
Thus, the values coming up for $\alpha_n$ in each of the proposed algorithms will seemingly be different and produce different sequences $x_n$.
\end{remark}

\subsection{Numerical validation on a simple case}

\begin{figure}[htbp]
\centering
\includegraphics[height=3cm]{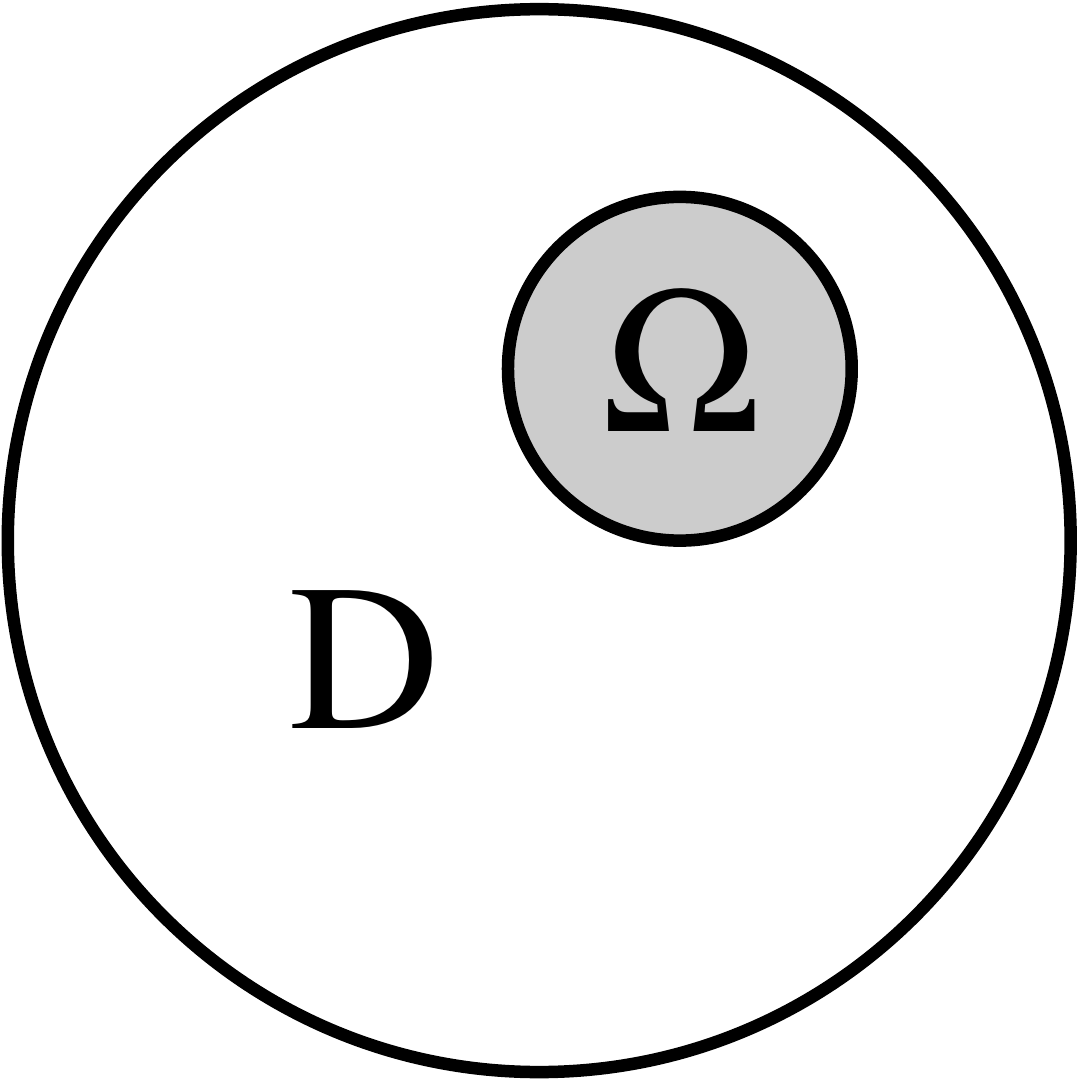}
\caption{A simple study case}
\label{fig:simple.schema}
\end{figure}

We present here some 2D numerical results  in the simple case illustrated on figure~\ref{fig:simple.schema}.
The considered object of support is a disc $D$ of section 2.1 containing defects which are also in the shape of a disc $\Omega$ of section 0.6.
With a fixed wave number \(k=10\), the size of the object is then thrice the wavelength and the size of the defects is approximatively one wavelength.
Also, the reference index $n_0$ takes its values in $[1.56,\,1.84]$ inside $D$ and the perturbed version $n_1$ takes its values in $[2.01,\,2.16]$ inside $\Omega$.
Finally, we used $99$ incoming/measurement directions evenly distributed over $[0,\,2\pi]$.

\begin{figure}[htbp]
\centering

\subfloat[Steepest descent algorithm]{\includegraphics[width=0.45\linewidth]{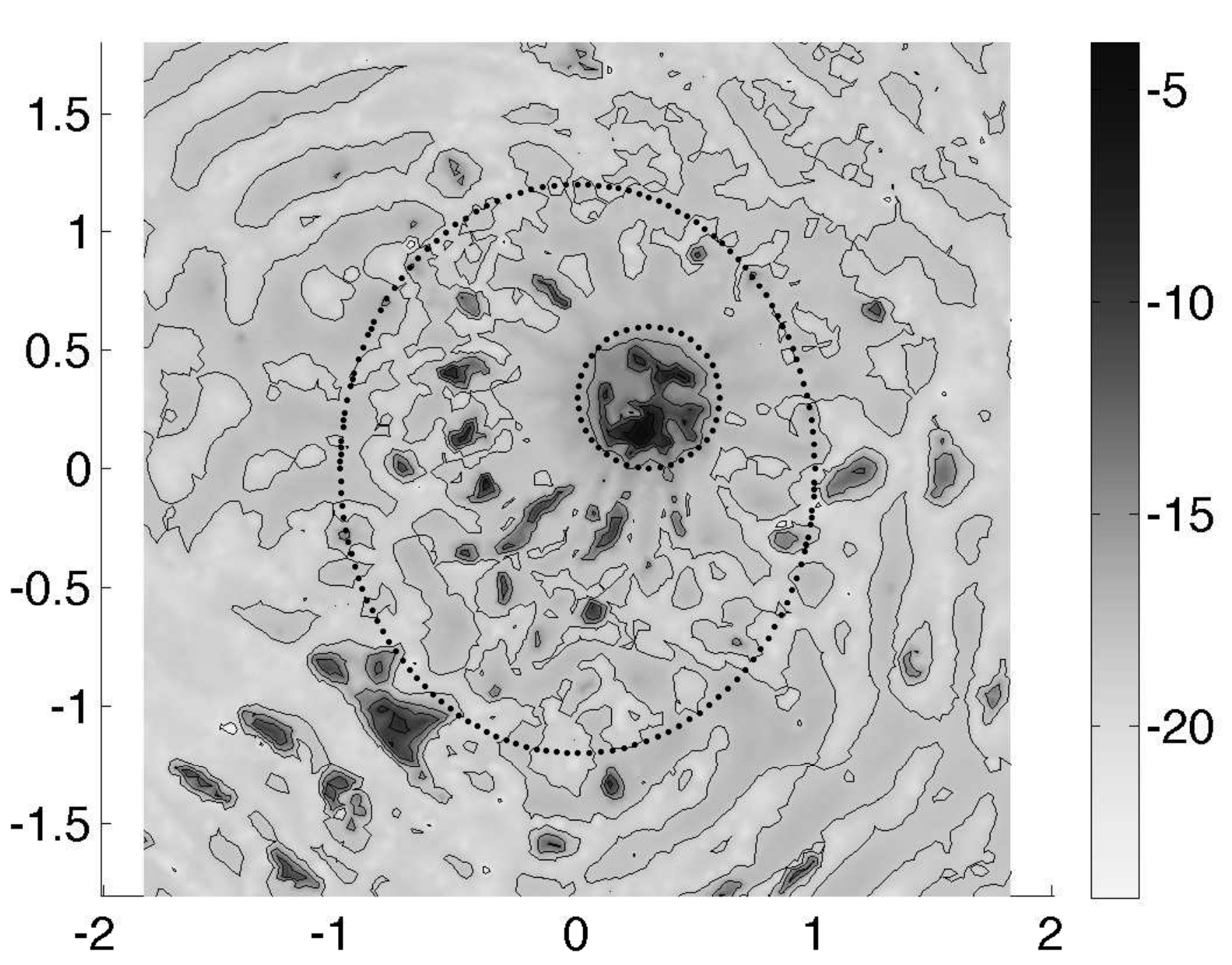}\label{fig:simple.descente}}\hfill
\subfloat[Gradient projection algorithm]{\includegraphics[width=0.49\linewidth]{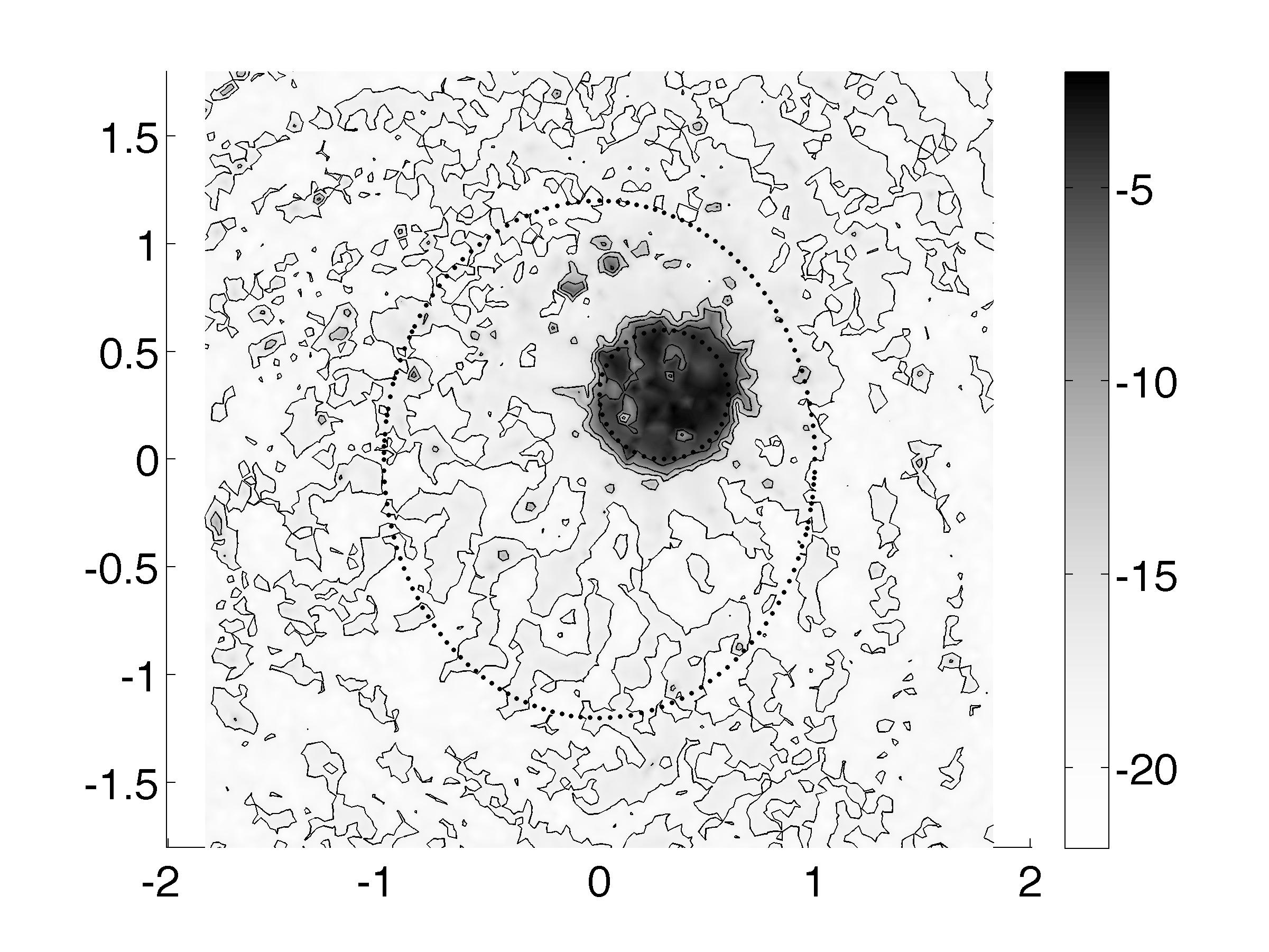}\label{fig:simple.projection}}

\subfloat[Matlab's \texttt{fminunc} function]{\includegraphics[width=0.49\linewidth]{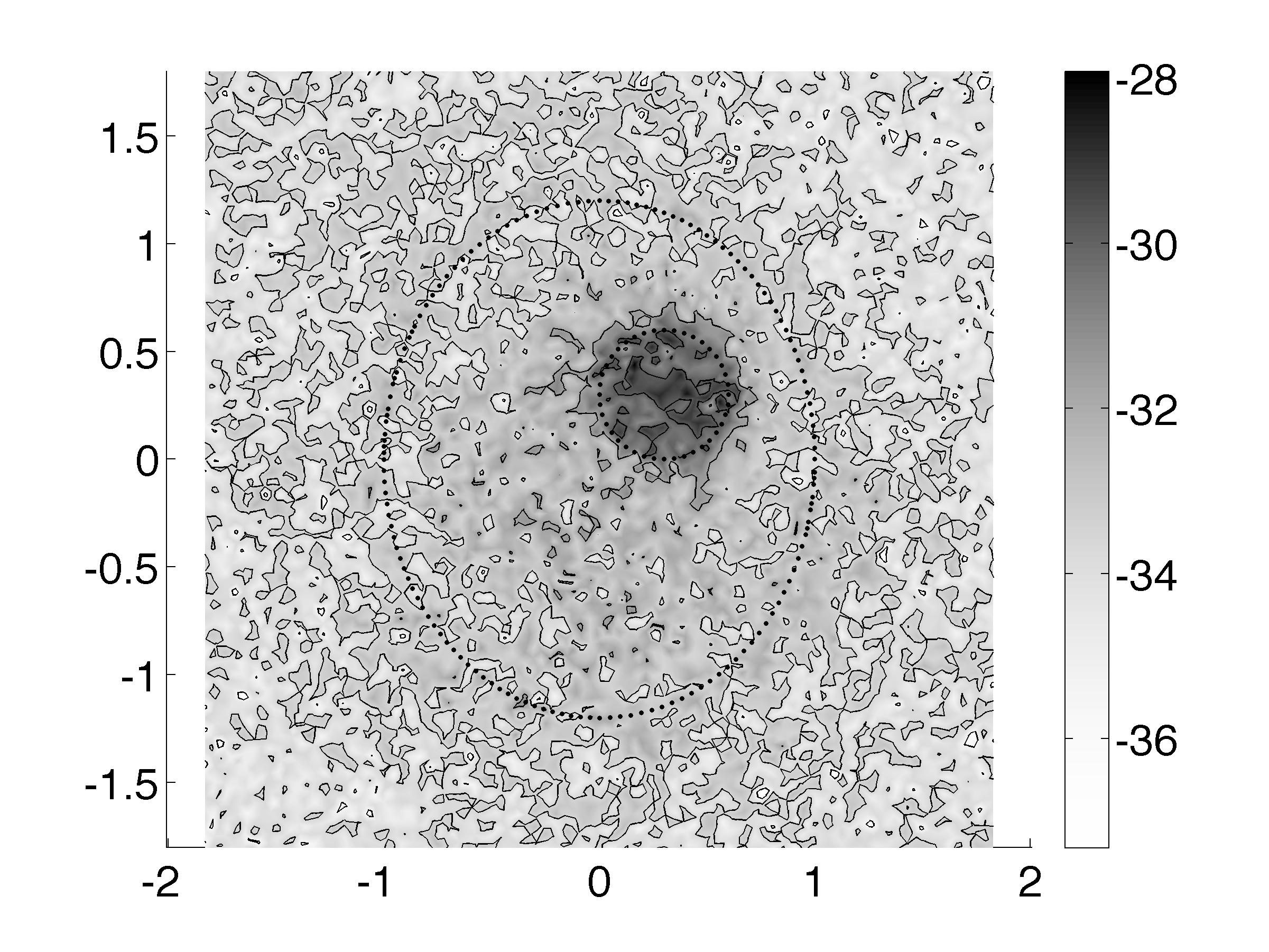}\label{fig:simple.fminunc}}\hfill
\subfloat[Matlab's \texttt{fmincon} function]{\includegraphics[width=0.49\linewidth]{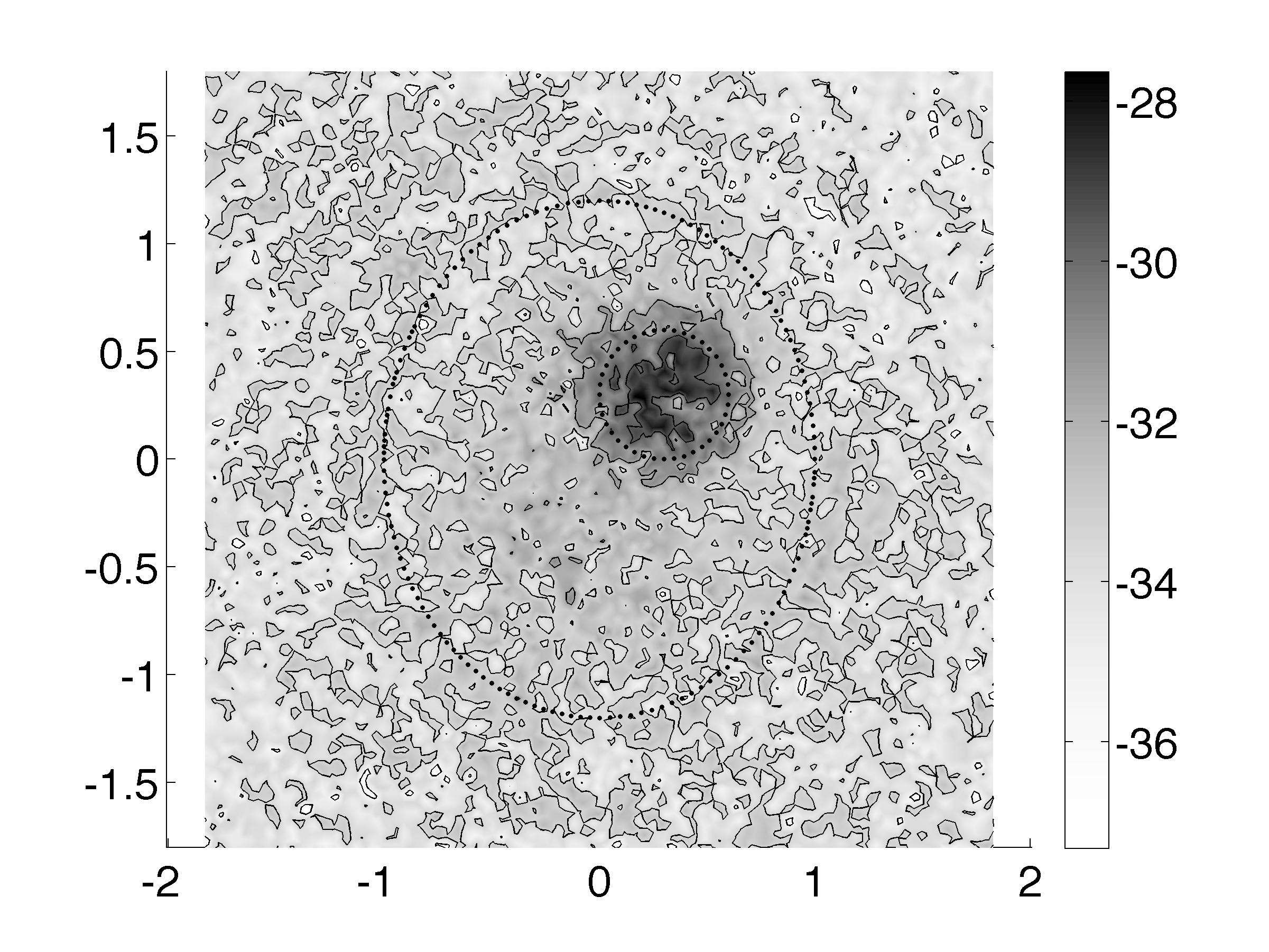}\label{fig:simple.fmincon}}

\medskip

\begin{tabular}{|c|ccc|}
\hline
\textbf{Number of iterations} 
& min & med & max ($\leqslant 400$)\\
\hline
Steepest descent & 11 & 400 & 400\\
Gradient projection & 14 & 120 & 400\\
Matlab's \texttt{fminunc} function & 8 & 17 & 68\\
Matlab's \texttt{fmincon} function & 9 & 19 & 58\\
\hline
\end{tabular}

\caption{Values of $\log_{10}\mathcal M_W(z)$ computed by various optimization methods with a relative tolerance on $x_n$ set to $10^{-9}$ as the only stopping rule}
\label{fig:methodes.iteratives}
\end{figure}

Figure~\ref{fig:methodes.iteratives} displays the infimums's values $\big(\mathcal M_W(z)\big)^4$ in a $\log_{10}$ scale, respectively obtained for each sampling point \(z_i\) (about 8500) with algorithm~\ref{algo:descent} (figure~\ref{fig:simple.descente}) and algorithm~\ref{algo:gp} (figure~\ref{fig:simple.projection}).
Note that these sampling points are unrelated to the finite elements nodes that were used to generate the data.
The linear search for the step length $\alpha$ was done by simple dichotomy on the gradient.
We also present the results obtained with Matlab's \texttt{fminunc} function applied to the form $P_{f_W^4}$ (figure~\ref{fig:simple.fminunc}) and Matlab's \texttt{fmincon} function applied to the form $f_W^4$ over the feasible set $\mathcal C_{z_i}$ (figure~\ref{fig:simple.fmincon}).
The \texttt{fminunc} and  \texttt{fmincon} functions are based on the interior-reflective Newton method described in~\cite{art.coleman.96}.
Furthermore, for each sampling point and each algorithm, the sequence has been initialized by \(x_0 = P_{\mathcal{C}_{z_i}}(0) = {\overline{u_{n_0}(\cdot,z_i)}}/{\norme{u_{n_0}(\cdot,z_i)}^2} \in \mathcal{C}_{z_i}\).
Indeed, it seems natural to start with a point already satisfying the constraint.

As can be expected, Matlab's functions give much faster results, but we note that even the very basic algorithms we proposed yield acceptable results.
This shows that the iterative optimization approach resulting in theorem~\ref{thm:minimisation} can produce a satisfactory localization of the defects, but for a computational cost that is not controlled at this point.

\begin{figure}[htbp]
\centering

\subfloat[Steepest descent algorithm]{\includegraphics[width=0.49\linewidth]{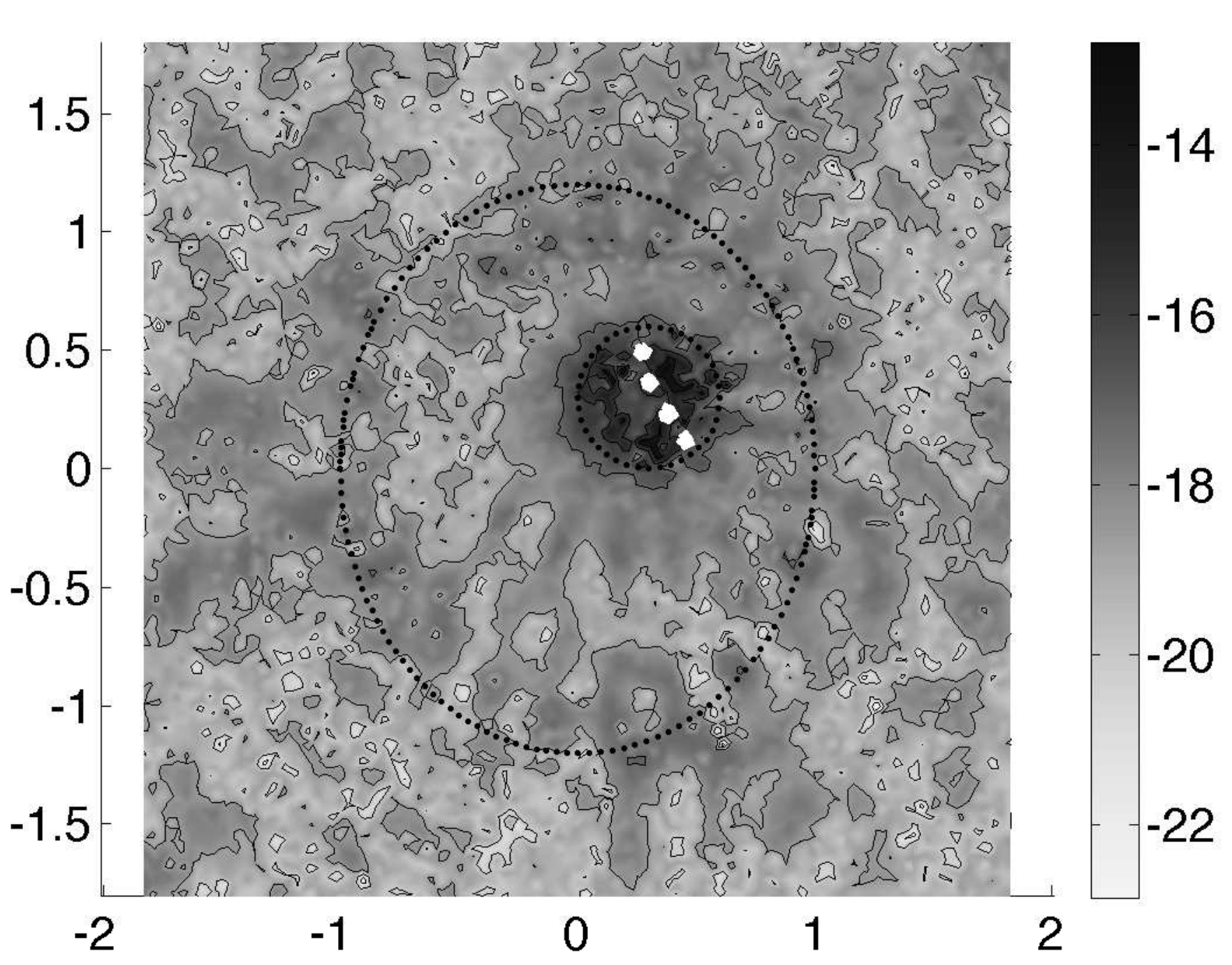}\label{fig:simple.descente.bruit.40}}\hfill
\subfloat[Gradient projection algorithm]{\includegraphics[width=0.49\linewidth]{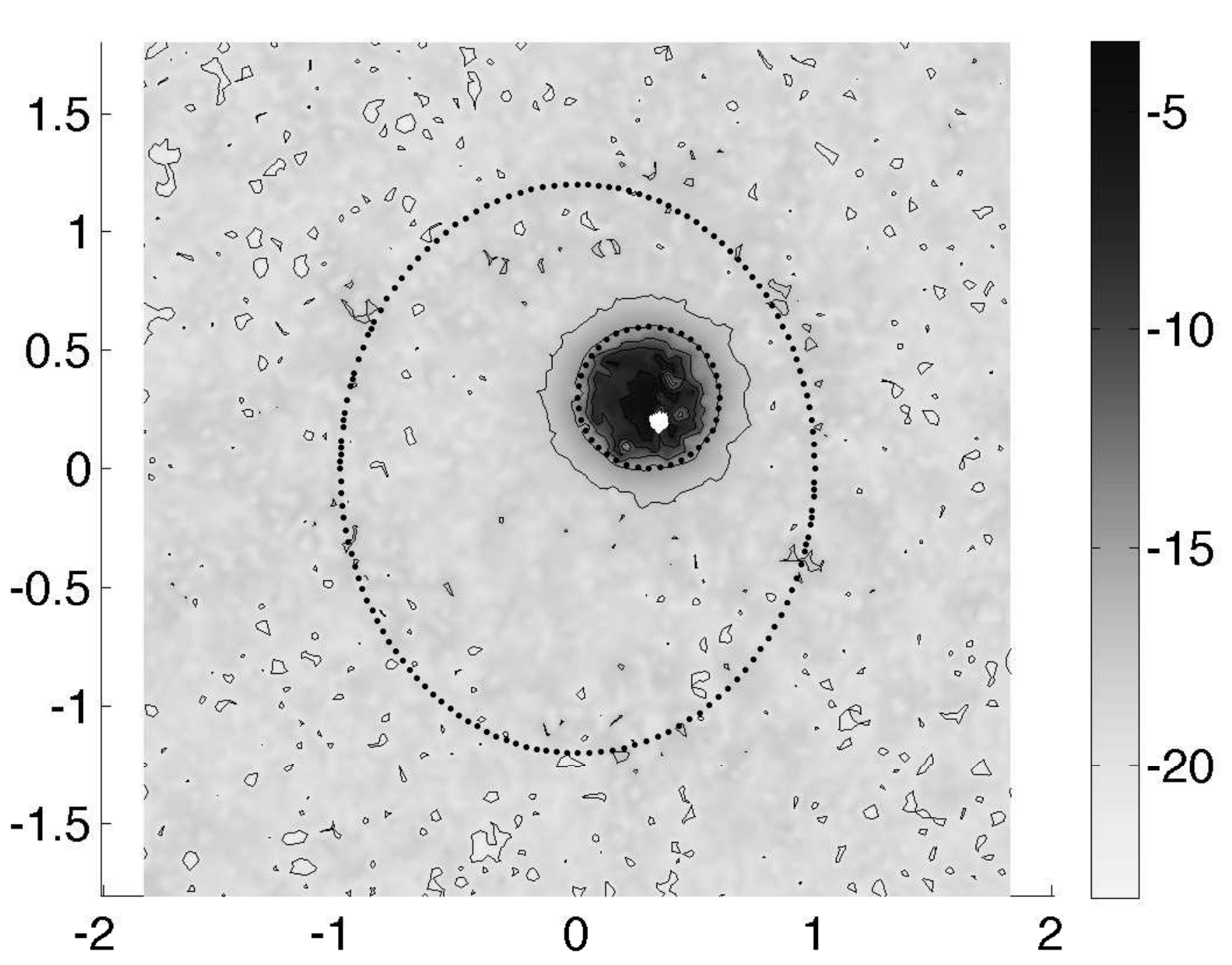}\label{fig:simple.projection.bruit.40}}

\subfloat[Matlab's \texttt{fminunc} function]{\includegraphics[width=0.49\linewidth]{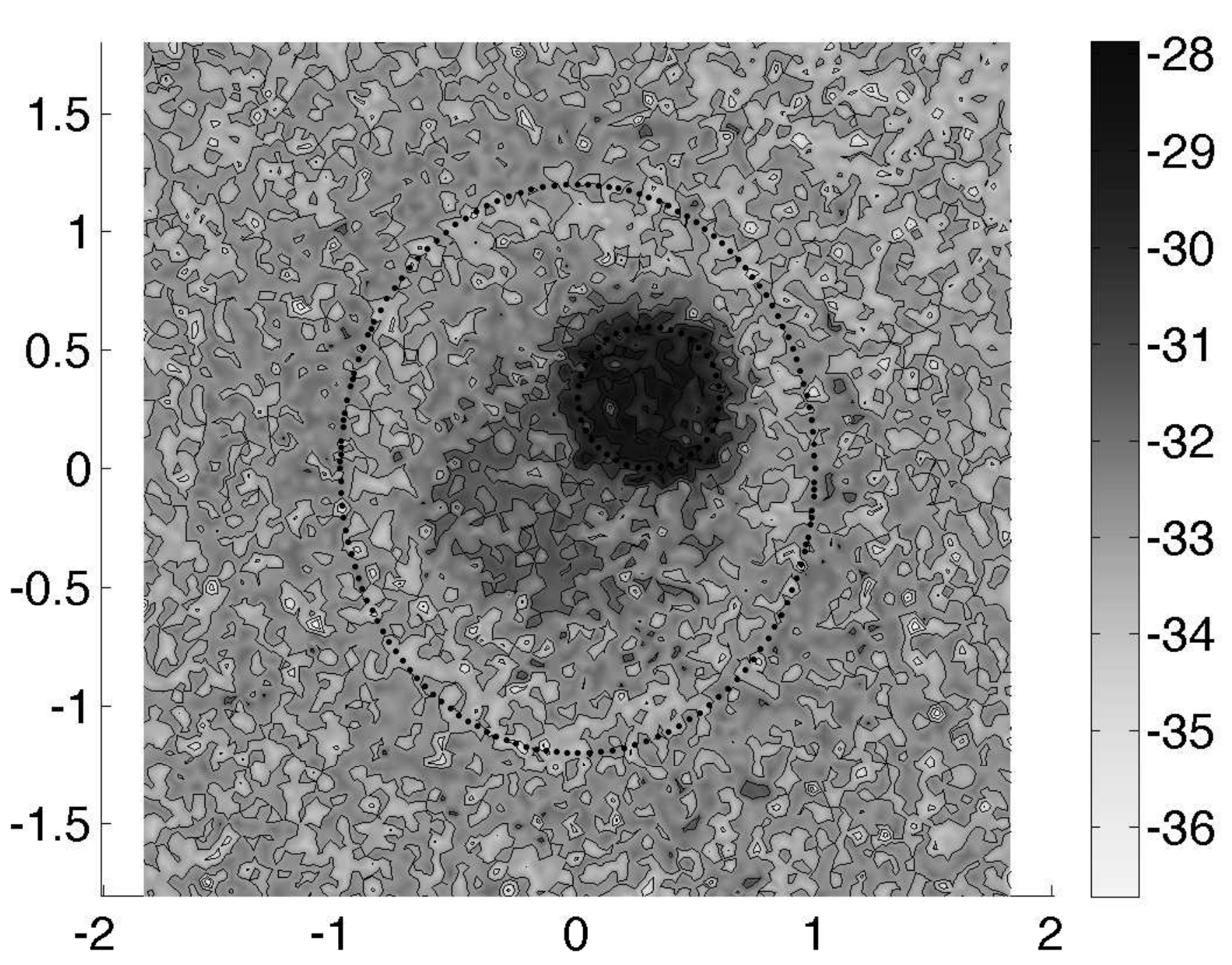}\label{fig:simple.fminunc.bruit.40}}\hfill
\subfloat[Matlab's \texttt{fmincon} function]{\includegraphics[width=0.49\linewidth]{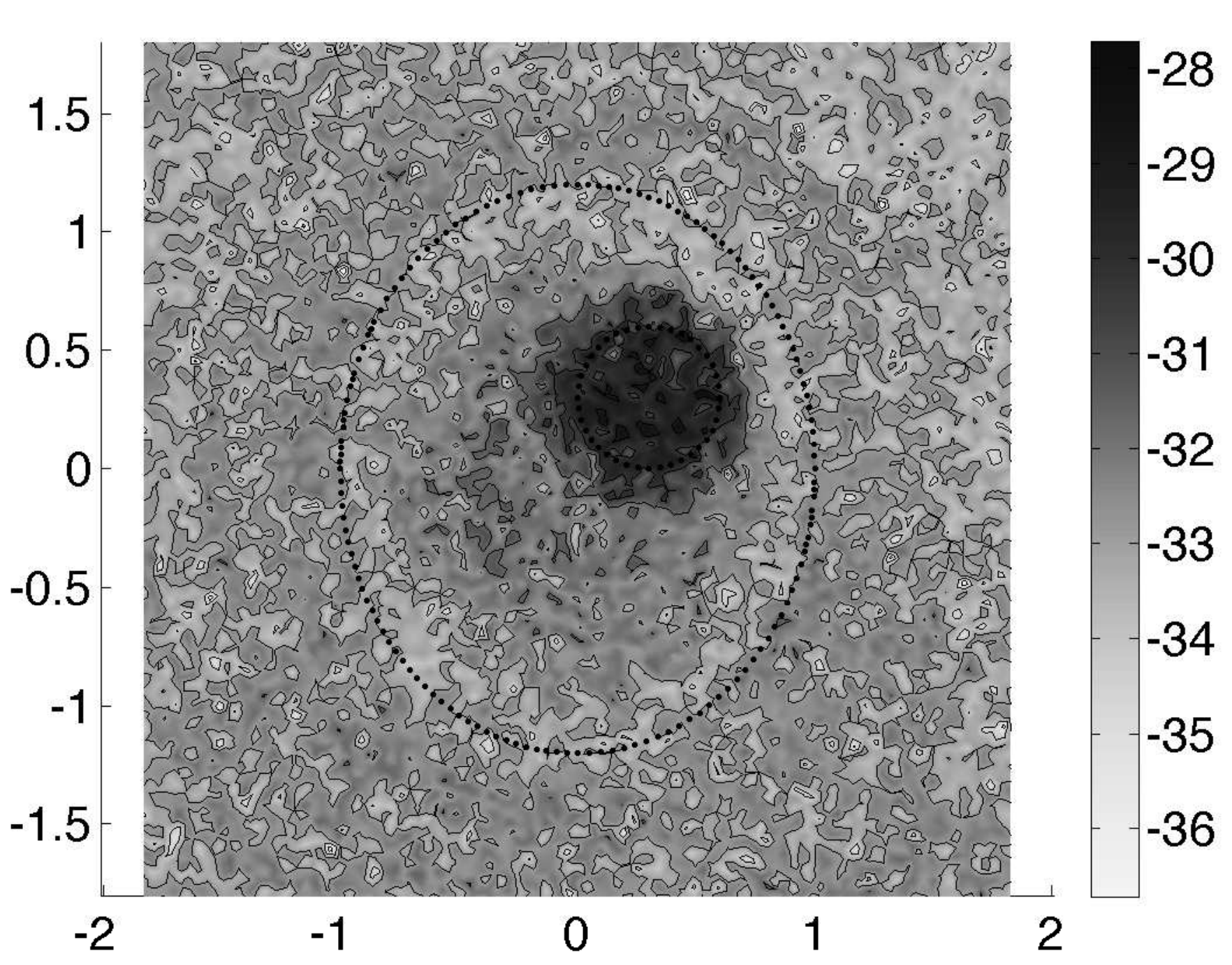}\label{fig:simple.fmincon.bruit.40}}

\medskip

\begin{tabular}{|c|ccc|}
\hline
\textbf{Number of iterations} 
& min & med & max ($\leqslant 400$)\\
\hline
Steepest descent & 5 & 10 & 400\\
Gradient projection & 6 & 12 & 145\\
Matlab's \texttt{fminunc} function & 6 & 10 & 14\\
Matlab's \texttt{fmincon} function & 8 & 12 & 18\\
\hline
\end{tabular}

\caption{Values of $\log_{10}\mathcal M_W(z)$ with $10\%$ uniform noise added to the measurements}
\label{fig:methodes.iteratives.bruit.10}
\end{figure}

To go further, we also tested the sensitivity with respect to the data, to take into account simulation  or measurement inaccuracy.
This was done by adding uniform random noise to the measurements and thus, using $u^\varepsilon_{n_1}$ such that $\norme{u^\varepsilon_{n_1}-u\I_{n_1}}<\varepsilon\norme{u\I_{n_1}}$.
The optimization approach turns out to be quite robust regarding this criterion.
Indeed, figure~\ref{fig:methodes.iteratives.bruit.10} shows that acceptable results are still obtained with $10\%$ relative noise, with a seemingly better visualization for the gradient projection algorithm.
As this is sometimes the case, we also note that adding some noise has a slightly regularizing effect   that visibly enhances convergence speed  for the basic algorithms~\ref{algo:descent} and~\ref{algo:gp}.

\begin{figure}[htbp]
\centering

\subfloat[Steepest descent algorithm]{\includegraphics[width=0.49\linewidth]{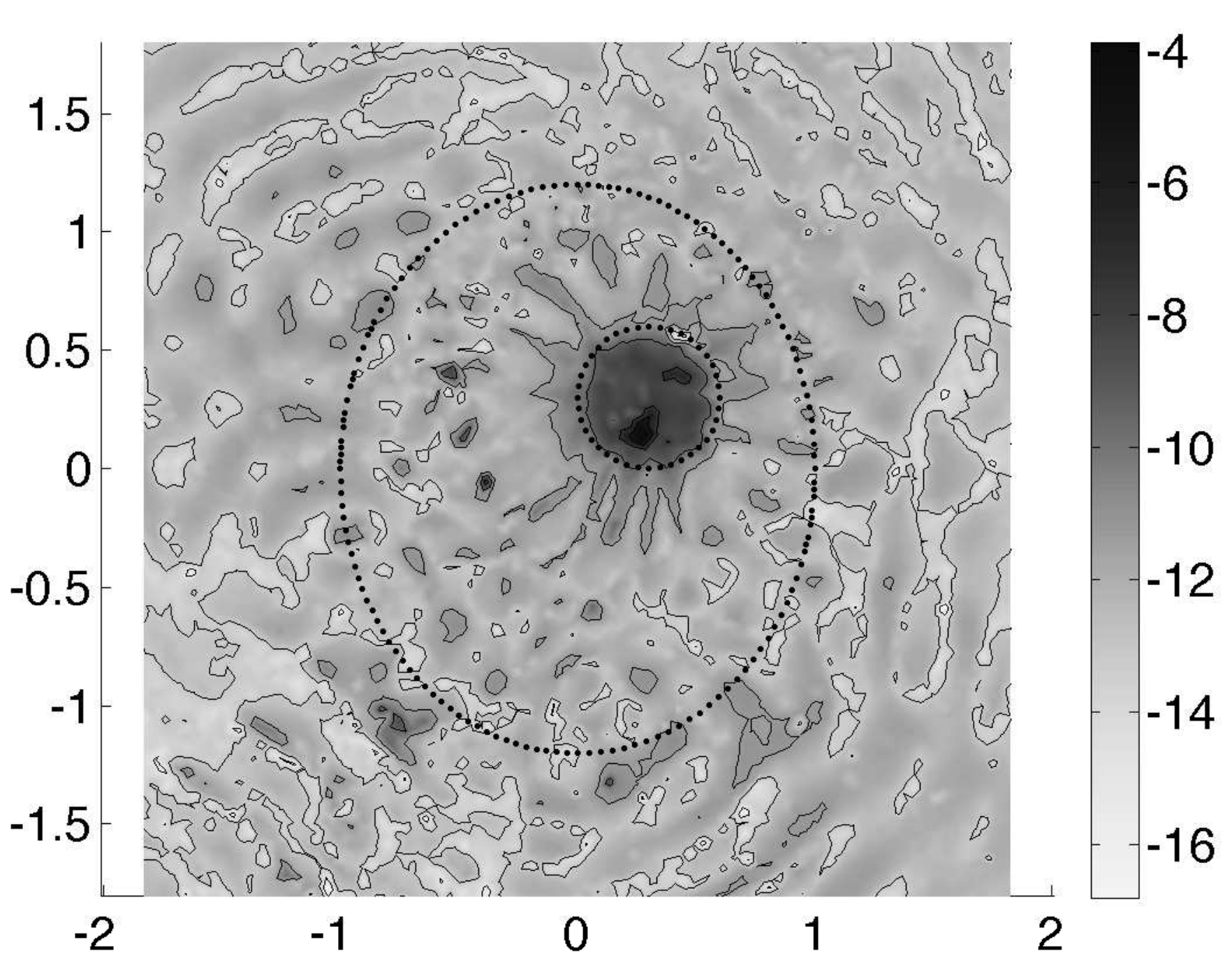}\label{fig:simple.descente.6}}\hfill
\subfloat[Gradient projection algorithm]{\includegraphics[width=0.49\linewidth]{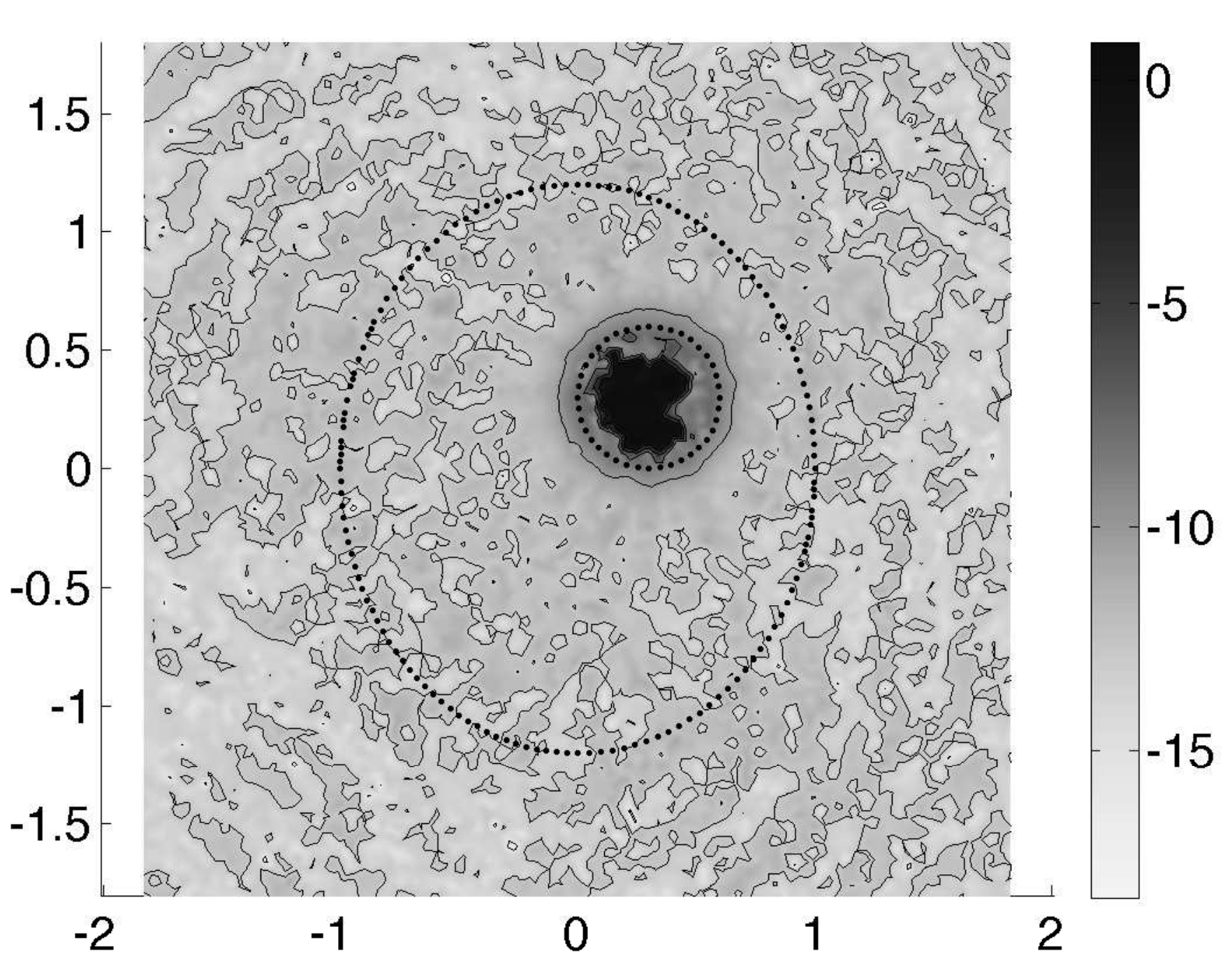}\label{fig:simple.projection.6}}

\subfloat[Matlab's \texttt{fminunc} function]{\includegraphics[width=0.49\linewidth]{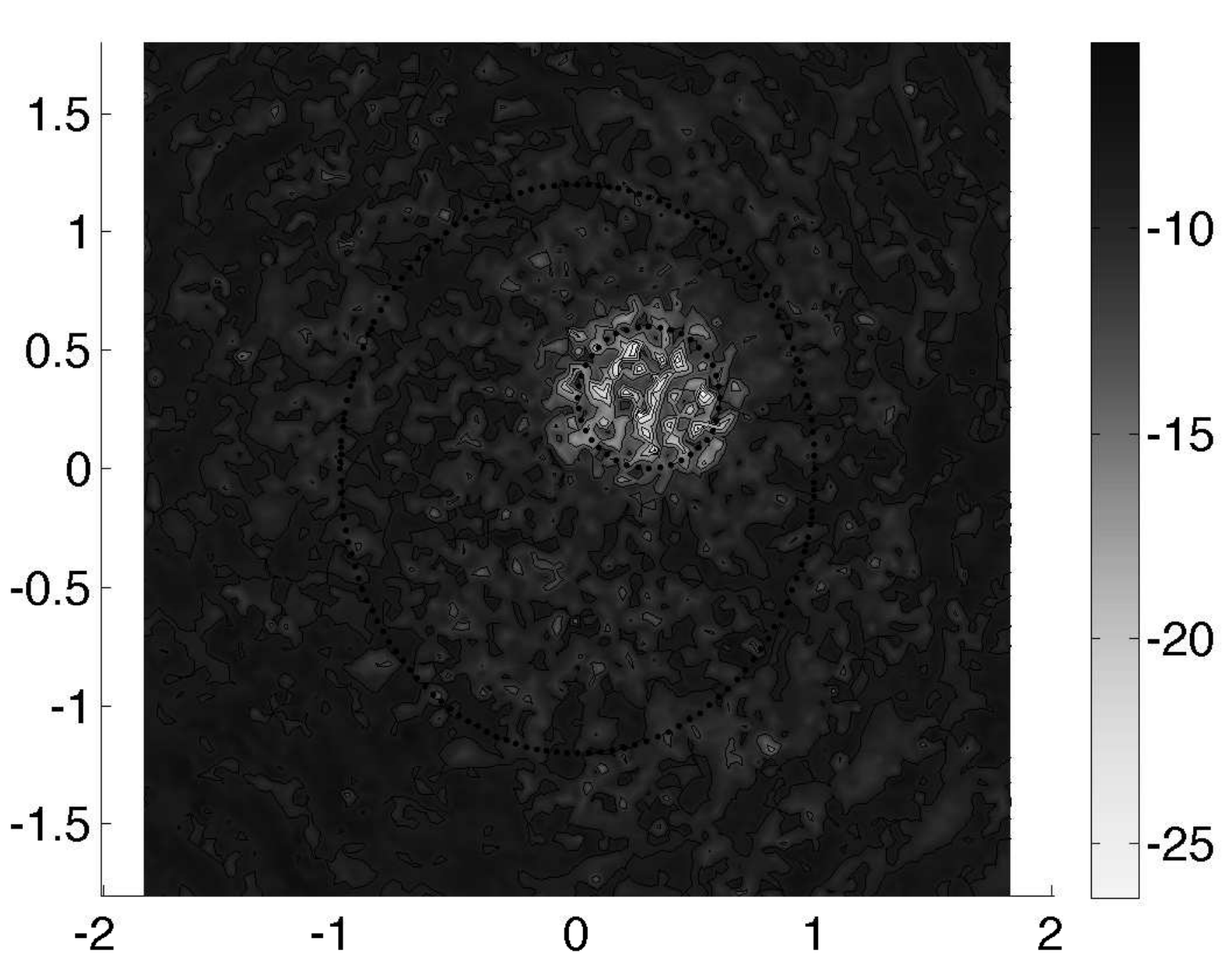}\label{fig:simple.fminunc.6}}\hfill
\subfloat[Matlab's \texttt{fmincon} function]{\includegraphics[width=0.49\linewidth]{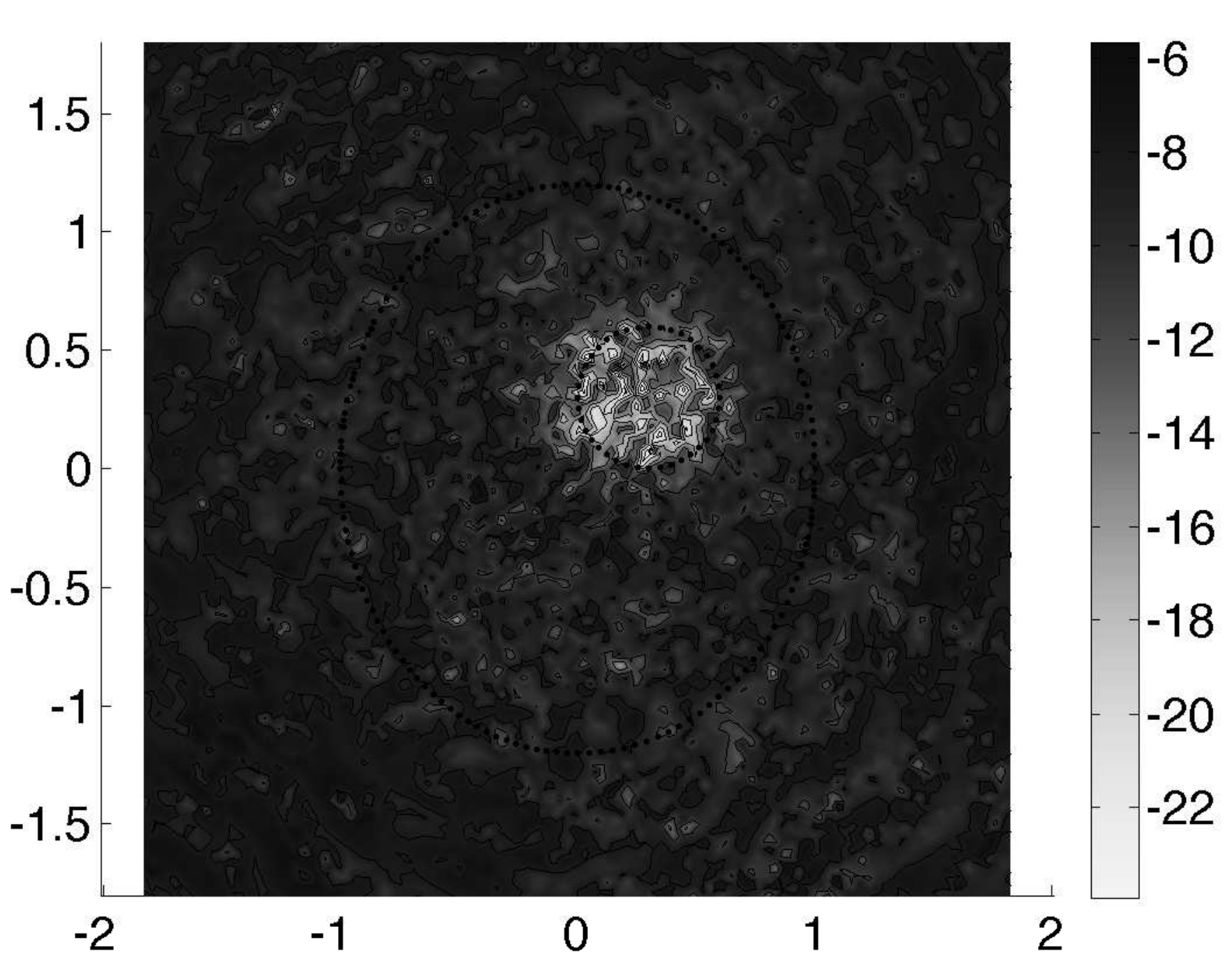}\label{fig:simple.fmincon.6}}

\medskip

\begin{tabular}{|c|ccc|}
\hline
\textbf{Number of iterations} 
& min & med & max ($\leqslant 400$)\\
\hline
Steepest descent & 4 & 46 & 400\\
Gradient projection & 5 & 29 & 400\\
Matlab's \texttt{fminunc} function & 4 & 9 & 23\\
Matlab's \texttt{fmincon} function & 7 & 9 & 26\\
\hline
\end{tabular}

\caption{Localization of the defects with the relative tolerances set to $10^{-6}$}
\label{fig:methodes.iteratives.6}
\end{figure}

\begin{remark}\label{rmk:stopping}
The results in figures~\ref{fig:methodes.iteratives} and~\ref{fig:methodes.iteratives.bruit.10}  were obtained by considering only the relative variation on $x_n$, as proposed in algorithm~\ref{algo:descent}:
$\norme{x_{n+1} - x_n} / (1+\norme{x_n}) < 10^{-9}.$
Yet, many optimization algorithms rely on multiple stopping criterions, including a relative variation tolerance with respect to the cost function's values.
We see here that these values are very small and thus hardly usable in a stopping rule.
The same goes for the gradient and first order optimality criteria.
As a consequence, we had to set the relative tolerance regarding the cost function to $10^{-32}$ in Matlab's optimization functions, so that this condition would never be triggered.
With the tolerances for the relative variation regarding $x_n$ and the cost function set to their default ($10^{-6}$, see Matlab's help), we see in figure~\ref{fig:methodes.iteratives.6} that Matlab's functions produce the opposite result to what was expected.
Indeed, we see that the values of $\mathcal M_W(z)$ outside $\Omega$ are close to zero but higher than the ones inside $\Omega$.
On the other hand, the simple algorithms presented in the previous section still yield the expected localizations, and at a lower computational time. 
This highlights that a special care has to be  taken regarding the stopping rule, as we compare the minimal values issued from different minimization problems.
\end{remark}

\subsection{Experimentations on a non-trivial absorbing example}

\begin{figure}[htbp]
\centering

\subfloat[Reference index's real part]{\includegraphics[width=0.49\linewidth]{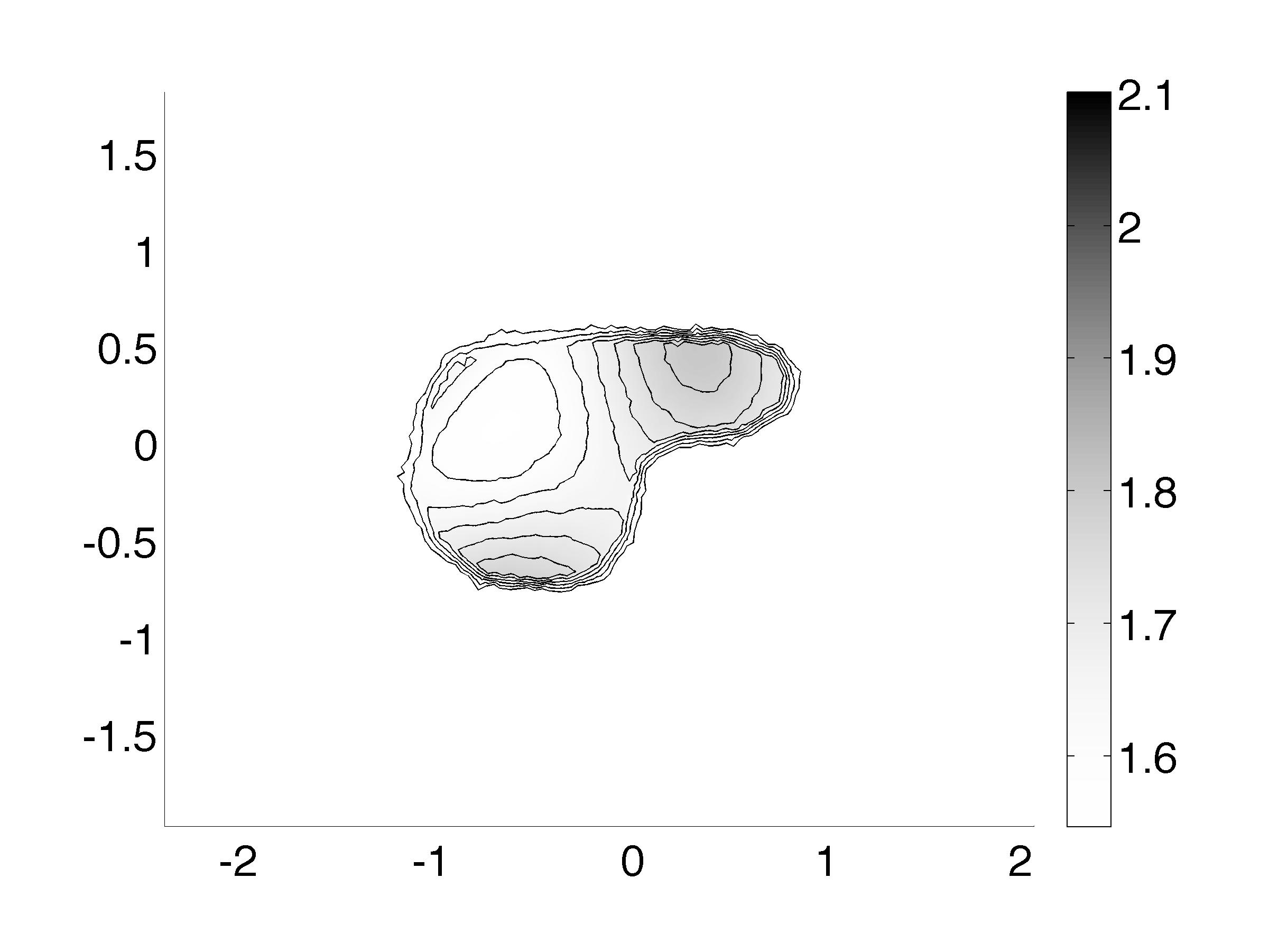}\label{fig:patate.reel}}
\subfloat[Reference index's imaginary part]{\includegraphics[width=0.49\linewidth]{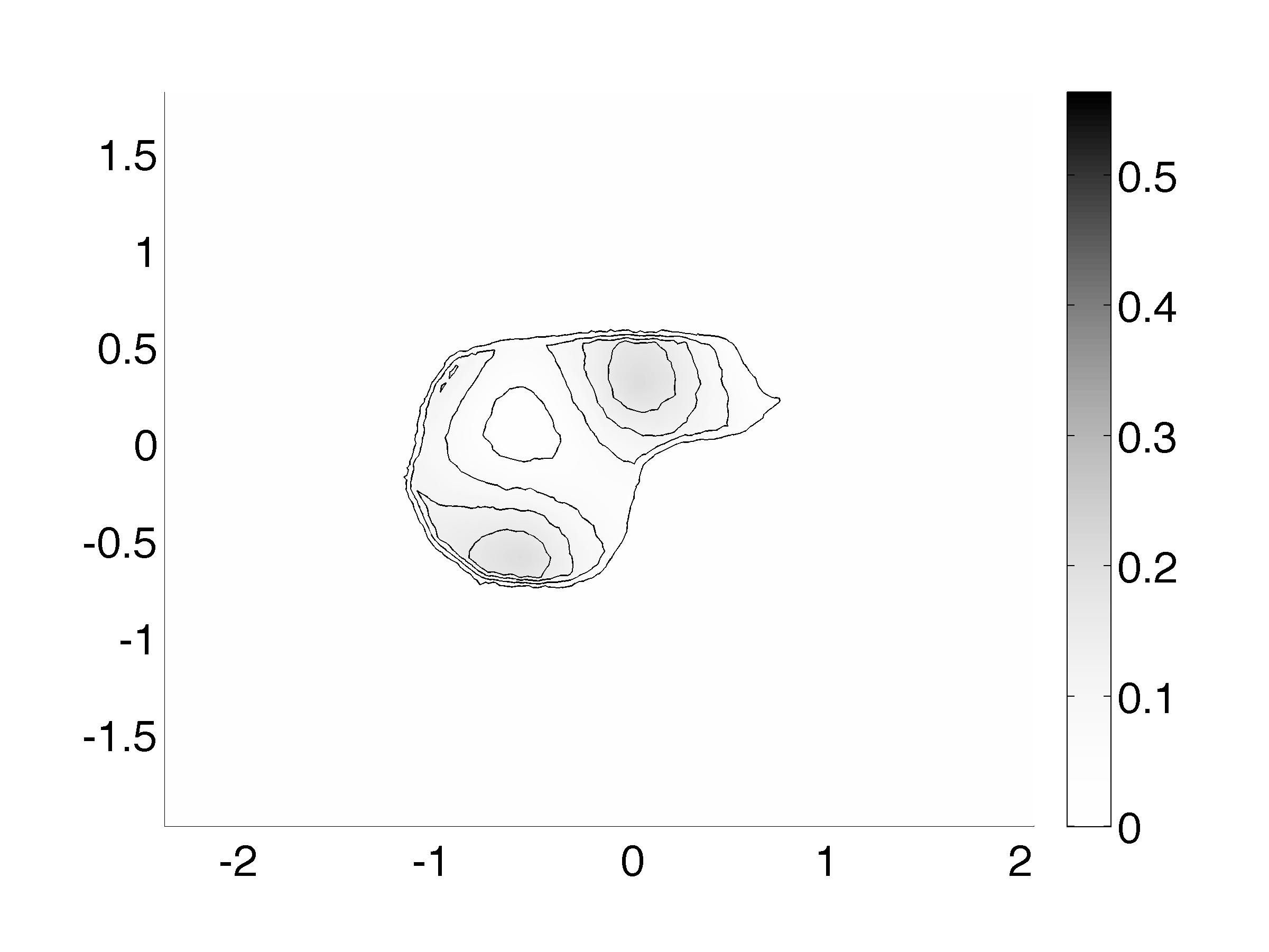}\label{fig:patate.imag}}

\subfloat[Perturbed index's real part]{\includegraphics[width=0.49\linewidth]{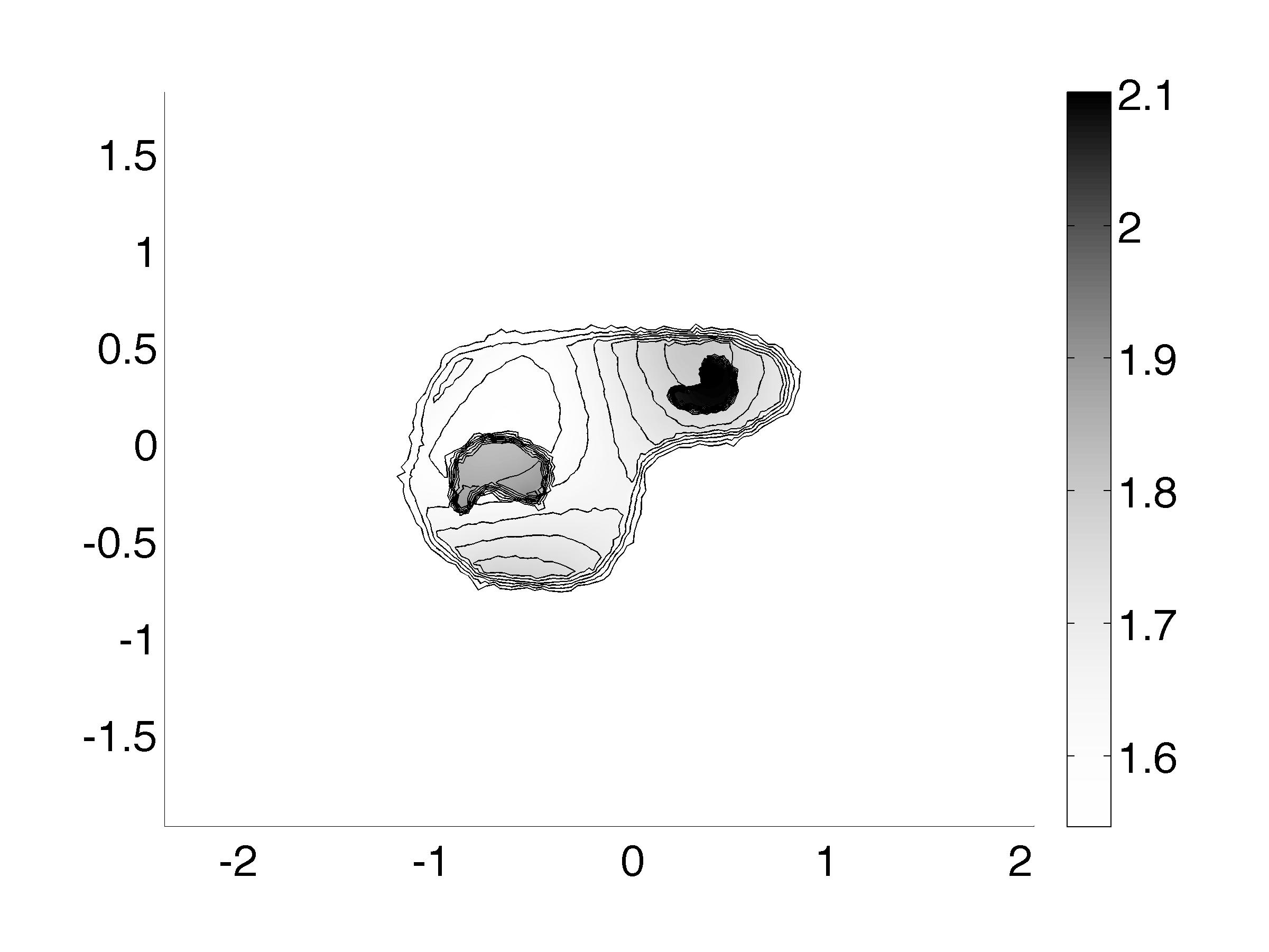}\label{fig:patate.perturbee.reel}}
\subfloat[Perturbed index's imaginary part]{\includegraphics[width=0.49\linewidth]{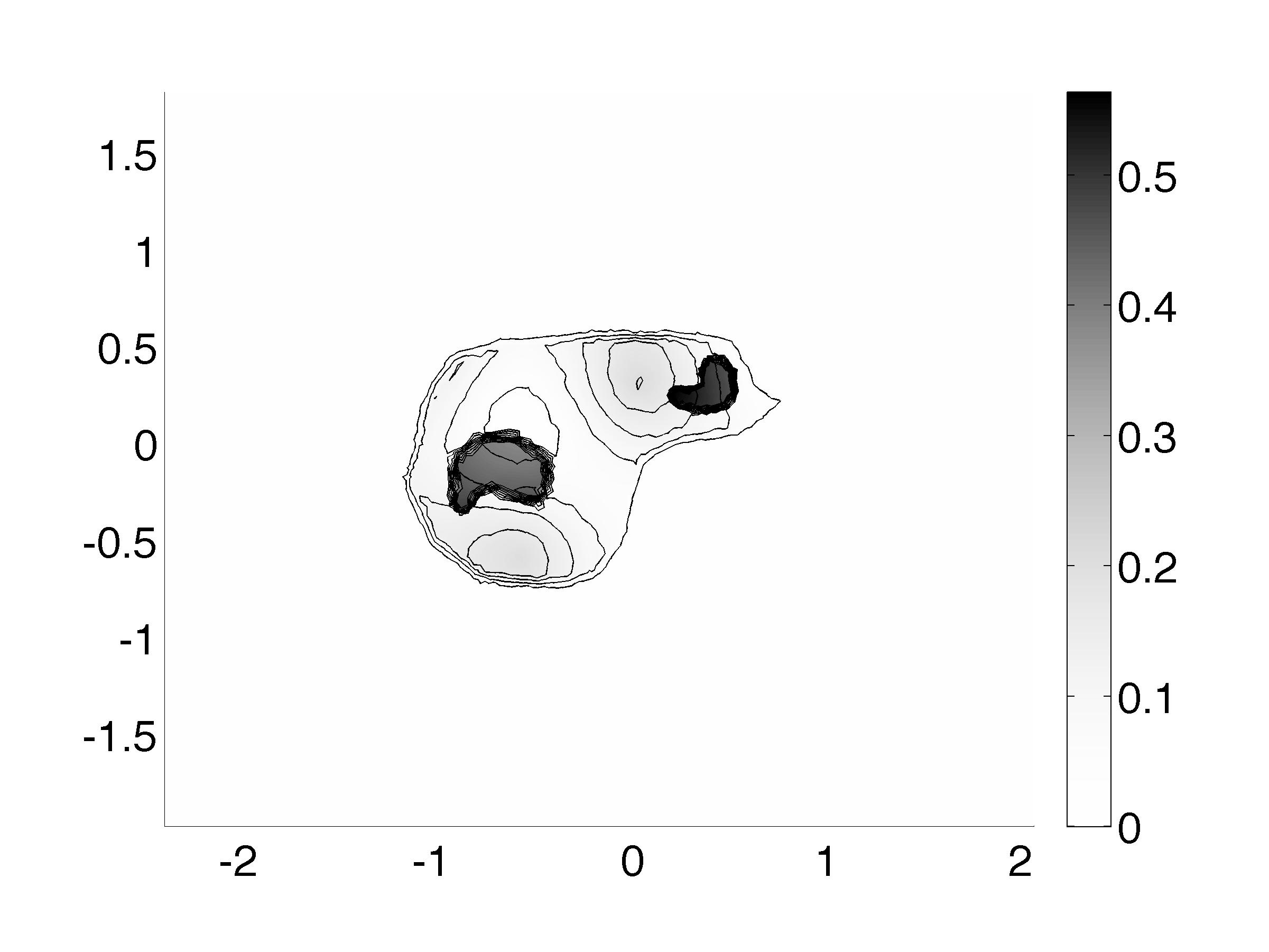}\label{fig:patate.perturbee.imag}}

\subfloat[Values of $\log_{10}\mathcal M_W(z)$ with $2\%$ noise]{\includegraphics[width=0.49\linewidth]{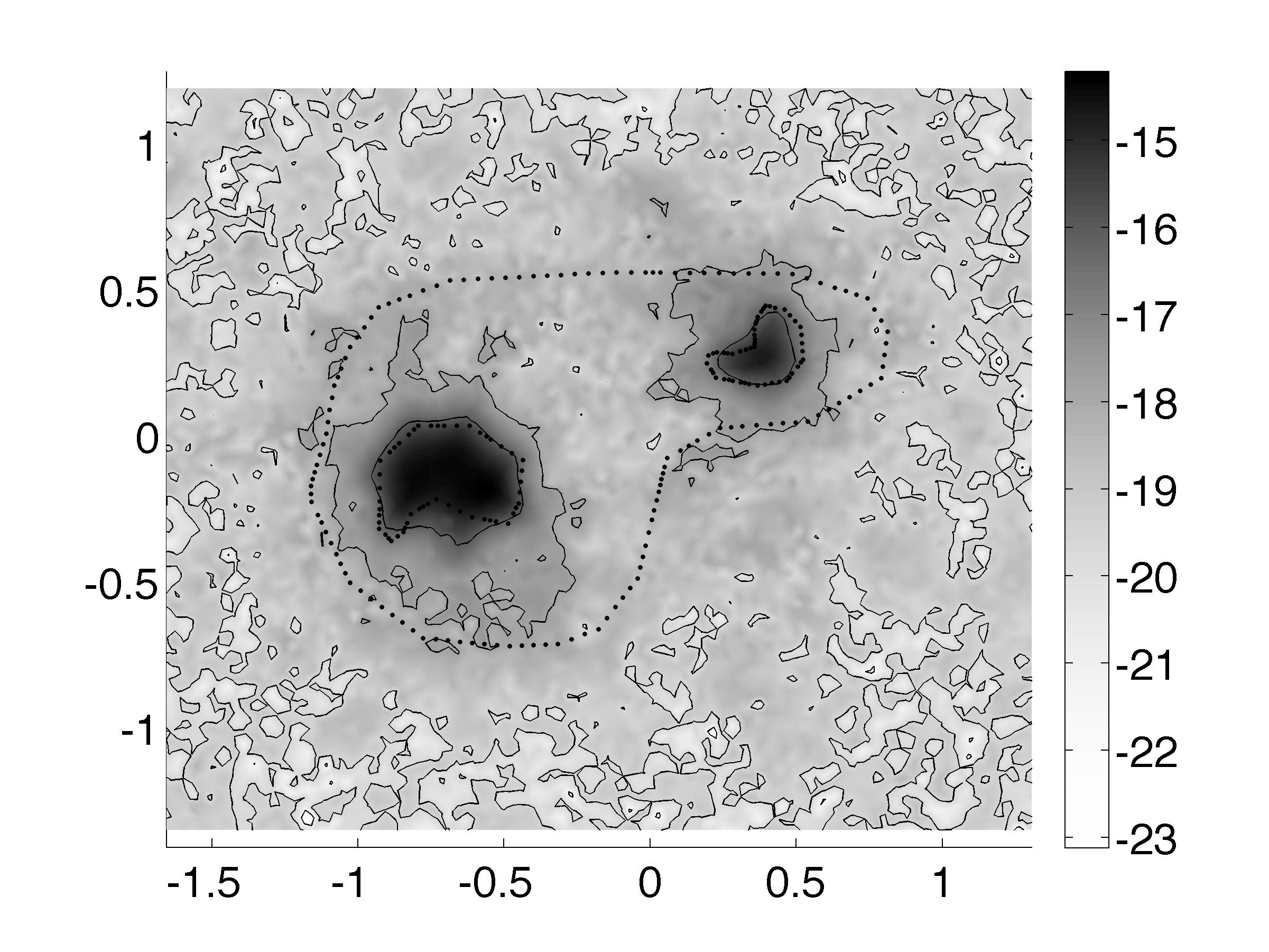}\label{fig:patate.projection}}
\subfloat{
\hspace{-1em}
\raisebox{2em}{
\begin{tabular}[b]{|c|ccc|}
\hline
\textbf{Number of iterations} 
& min & med & max \\
\hline
Steepest descent & 9 & 36 & 198\\
Gradient projection & 9 & 28 & 400\\
Matlab's \texttt{fminunc}  & 6 & 12 & 18\\
Matlab's \texttt{fmincon}  & 8 & 13 & 21\\
\hline
\end{tabular}}}

\caption{Reconstruction of a non-trivial perturbation with the gradient projection algorithm}
\end{figure}

\begin{figure}[htbp]
\centering

\subfloat[Gradient projection algorithm]{\includegraphics[width=0.49\linewidth]{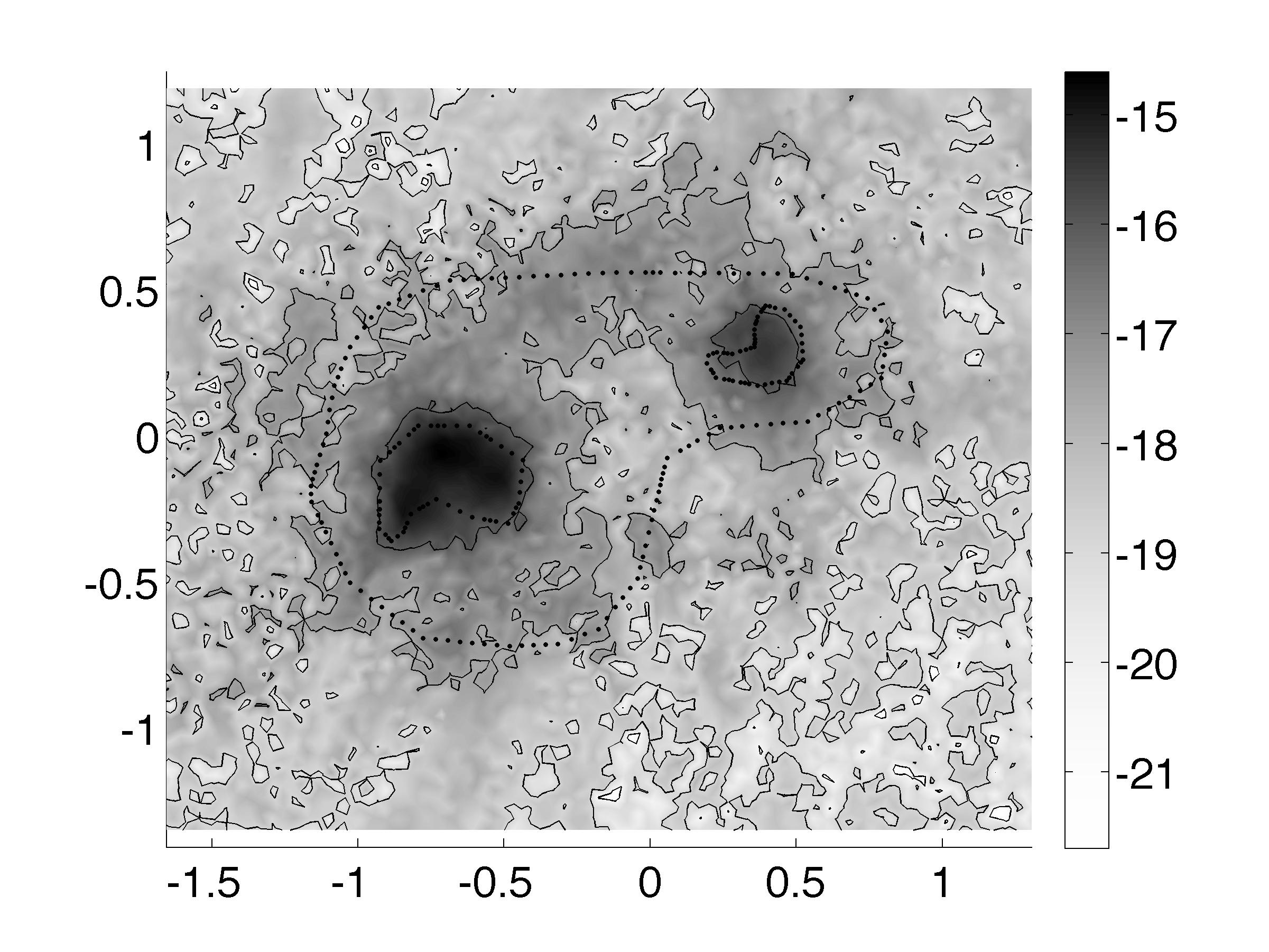}\label{fig:patate.projection.limited}}
\subfloat[Matlab's \texttt{fminunc} function]{\includegraphics[width=0.49\linewidth]{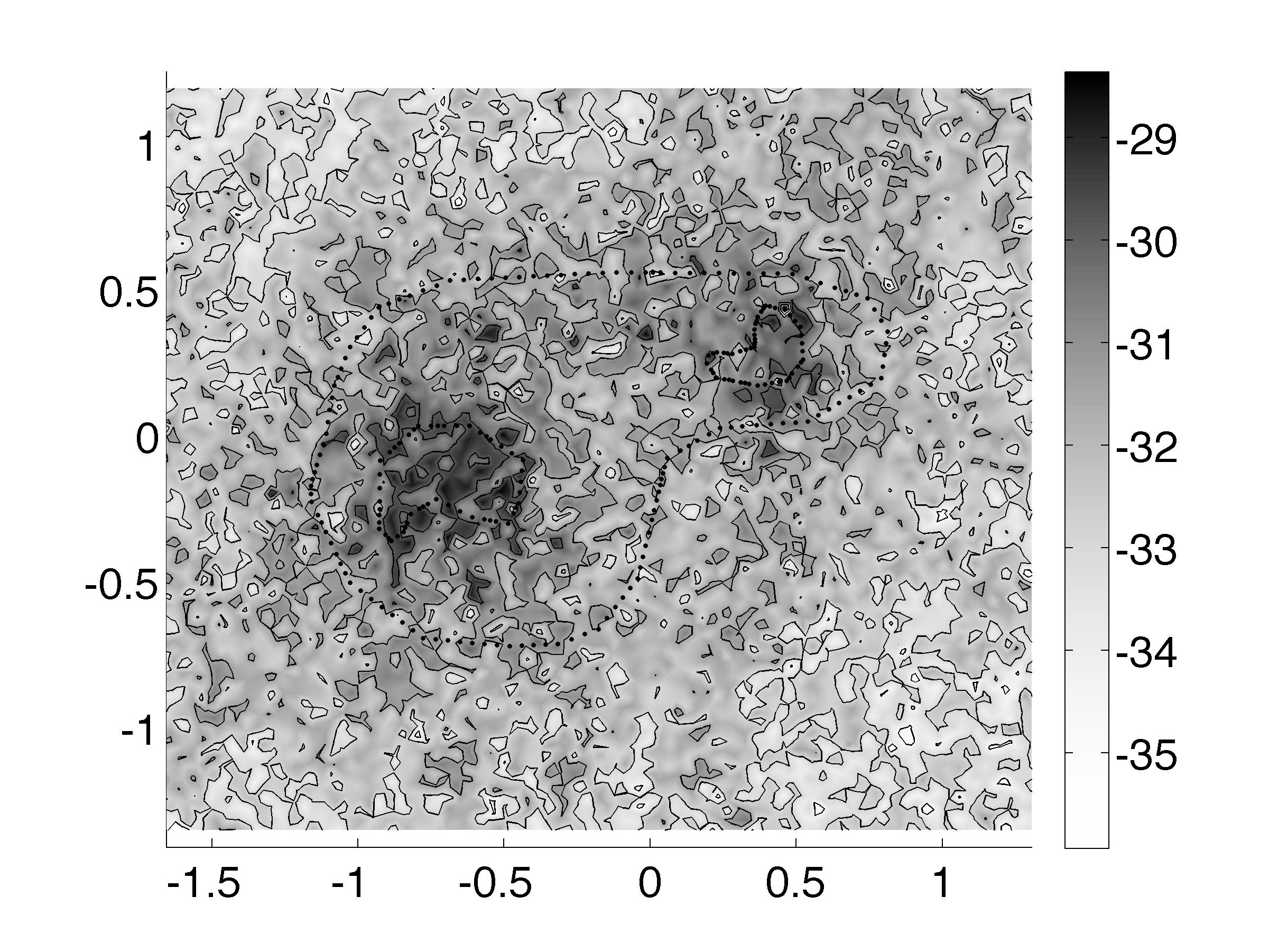}\label{fig:patate.fminunc.limited}}

\medskip 

\begin{tabular}[b]{|c|ccc|}
\hline
\textbf{Number of iterations} 
& min & med & max \\
\hline
Steepest descent & 7 & 41 & 216\\
Gradient projection & 7 & 29 & 158\\
Matlab's \texttt{fminunc}  & 7 & 11 & 17\\
Matlab's \texttt{fmincon}  & 7 & 12 & 20\\
\hline
\end{tabular}

\caption{Reconstruction of a non-trivial perturbation with $2\%$ and limited far-field data}
\end{figure}

It is illustrated in figure~\ref{fig:patate.projection} that comparable results are obtained on a more elaborate example with two non-convex and non-connected defects depicted in figures~\ref{fig:patate.reel}--\ref{fig:patate.perturbee.imag}.

Besides, theorem~\ref{thm:minimisation} is stated under some physical restrictions arising from the use of the scattering operator $(\mathrm{id} + 2ik\abs{\gamma}^2F_{n_0})$.
The numerical methods proposed in this section can however straightforwardly be extended to absorbing media and limited far-field data.
We see in figure~\ref{fig:patate.projection} that defects in complex valued  indices are still correctly  localized, even with $2\%$ uniform random noise added to the measurements.

Finally, even with limited far-field data, the gradient projection algorithm provided a satisfactory reconstruction, displayed in figure~\ref{fig:patate.projection.limited}.
In this last example, the 99 evenly distributed incidence/measurement directions were taken in $[0,\,\frac{4}{3}\pi]$. 
Still, even with this fairly high amount of data, it is to be noted that the four algorithms presented in this paper do not yield comparable results in this case.
Indeed, we see in figure~\ref{fig:patate.fminunc.limited}  that Matlab's functions fail to provide a usable reconstruction, despite our testing on a wide range of optimization parameters.

\section{Conclusion}
We have characterized the localization of defects in an inhomogeneous reference index by an optimization problem that is built only on the available data.
Objective function and feasible set turn out to be very simple.
This problem can thus be solved through a wide range of well known optimization methods, which we have numerically illustrated four examples of.
Yet, some limitations were noticed, regarding the convergence speed on some cases and the required amount of data.
Issues for which successful results were obtained using spectral methods in~\cite{art.grisel.12}.
This opens the perspective of looking for some stabilization, or more robust versions of the proposed optimization algorithms.

\appendix
\subsection{Derivatives of the objective function}\label{sec:gradients}

We give here the derivatives of the functions involved in the computation of $\mathcal M_W(z)$ as defined by~(\ref{eq:I.2}).
However, since the values of the form $f_W$ are real, the differential can not be $\C$-linear.
We therefore have to split the elements of $L^2(S^{d-1})$ into their real and imaginary parts and consider the objective function on pairs of real-valued functions to obtain proper $\R$-linear differentials.
The induced gradient is then given in the following lemma.

\begin{lemma}\label{lem:df}
The gradient of the form $\mathbf{f_{W}^4} : L^2(S^{d-1},\R) \times L^2(S^{d-1},\R) \to \R$, defined by
\begin{equation*}
    \mathbf{f_{W}^4}\mat{\phi \\ \psi} \pardef \big(f_{W}(\Psi)\big)^4,\quad \Psi \pardef \phi+i\psi,
\end{equation*}
is given by
\begin{equation}\label{eq:gradf4}
    \grad \mathbf{f_{W}^4}\mat{\phi \\ \psi} \pardef 4
    \mat{\re Gf\mat{\phi \\ \psi} \\ \im Gf\mat{\phi \\ \psi}},
\end{equation}
    where the function $Gf$ is an endomorphism on $(L^2(S^{d-1},\R))^2$ defined, with help of the self-adjoint parts $W_R \pardef ( W +  W\etoile)/2$ and $W_I \pardef ( W -  W\etoile)/2i$, by
    \begin{equation}\label{eq:Gf}
    	G_f\mat{\phi \\ \psi} \pardef \ps{ W_R \Psi}{\Psi}W_R\Psi + \ps{ W_I \Psi}{\Psi}W_I\Psi,\quad \Psi \pardef \phi+i\psi.
    \end{equation}

    Also, the second derivative of $\mathbf{f_{W}^4}$ is defined on $(L^2(S^{d-1},\R))^4$, for each point $(\phi,\psi) \in  (L^2(S^{d-1},\R))^2$, by
    \begin{equation}\label{eq:d2f4}
	\left[D^2\mathbf{f_{W}^4}\mat{\phi \\ \psi}\right](\mat{a \\ b},\mat{c \\ d}) = 4\ps{\mat{\re \left[H_f\mat{\phi \\ \psi}\right]\mat{a \\ b} \\ \im  \left[H_f\mat{\phi \\ \psi}\right]\mat{a \\ b}}}{\mat{c \\ d}},
    \end{equation}
    where the operator $\left[H_f\mat{\phi \\ \psi}\right]$ is an endomorphism on \(L^2(S^{d-1})\) defined for each $\mat{\phi \\ \psi} \in (L^2(S^{d-1}))^2$ by
    \begin{equation*}
    \left[H_f\mat{\phi \\ \psi}\right]\mat{a \\ b} \pardef 2\re\ps{W_R\Psi}{\Phi}W_R\Psi + \ps{W_R\Psi}{\Psi}W_R\Phi + 2\re\ps{W_I\Psi}{\Phi}W_I\Psi + \ps{W_I\Psi}{\Psi}W_I\Phi,
    \end{equation*}
    where $\Psi \pardef \phi+i\psi$ and $\Phi \pardef a+ib$.
\end{lemma}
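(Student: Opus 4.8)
The plan is to differentiate the real-valued quartic form $\mathbf{f_{W}^4}$ directly, exploiting the self-adjointness of $W_R$ and $W_I$ together with the identification of the real inner product on pairs $(\phi,\psi)$ with the real part of the hermitian product on $L^2(S^{d-1})$.

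First I would simplify the form. Since $W = W_R + iW_I$ with $W_R\etoile = W_R$ and $W_I\etoile = W_I$, the scalars $\ps{W_R\Psi}{\Psi}$ and $\ps{W_I\Psi}{\Psi}$ are real and $\ps{W\Psi}{\Psi} = \ps{W_R\Psi}{\Psi} + i\ps{W_I\Psi}{\Psi}$, so that $\mathbf{f_{W}^4}\mat{\phi \\ \psi} = \abs{\ps{W\Psi}{\Psi}}^2 = \ps{W_R\Psi}{\Psi}^2 + \ps{W_I\Psi}{\Psi}^2$ is a sum of squares of two real quadratic forms. The second ingredient is a bookkeeping identity: for $\Phi = a + ib$ and any $u \in L^2(S^{d-1})$ one has $\re\ps{u}{\Phi} = \ps{\mat{\re u \\ \im u}}{\mat{a \\ b}}$, the pairing on the right being the real inner product on $(L^2(S^{d-1},\R))^2$. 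This identity is exactly what turns a complex differential into a real gradient, and it is the source of the real/imaginary splitting in~(\ref{eq:gradf4}) and~(\ref{eq:d2f4}).

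For the first derivative, differentiating $\ps{W_R\Psi}{\Psi}$ in a direction $\Phi$ gives $\ps{W_R\Phi}{\Psi} + \ps{W_R\Psi}{\Phi}$, and self-adjointness rewrites the first term as $\overline{\ps{W_R\Psi}{\Phi}}$, whence $2\re\ps{W_R\Psi}{\Phi}$; the same holds for $W_I$. The chain rule applied to the sum of squares then yields $D\mathbf{f_{W}^4}\mat{\phi \\ \psi}[\Phi] = 4\,\re\ps{\ps{W_R\Psi}{\Psi}W_R\Psi + \ps{W_I\Psi}{\Psi}W_I\Psi}{\Phi}$, and the vector inside the bracket is precisely $G_f\mat{\phi \\ \psi}$ of~(\ref{eq:Gf}). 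Reading this off through the pairing identity gives $\grad\mathbf{f_{W}^4} = 4\mat{\re G_f \\ \im G_f}$, which is~(\ref{eq:gradf4}). For the second derivative I would regard $G_f$ as a vector field and differentiate it once more in a direction $\Phi = a + ib$ by the product rule: the derivatives of the scalar factors $\ps{W_R\Psi}{\Psi}$ and $\ps{W_I\Psi}{\Psi}$ produce the terms $2\re\ps{W_R\Psi}{\Phi}W_R\Psi$ and $2\re\ps{W_I\Psi}{\Phi}W_I\Psi$, while differentiating $W_R\Psi$ and $W_I\Psi$ contributes $\ps{W_R\Psi}{\Psi}W_R\Phi$ and $\ps{W_I\Psi}{\Psi}W_I\Phi$; their sum is exactly $\left[H_f\mat{\phi \\ \psi}\right]\mat{a \\ b}$. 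Pairing $4\,\re\ps{\left[H_f\right]\mat{a \\ b}}{\Phi'}$ against the second direction $\mat{c \\ d}$, with $\Phi' = c + id$, via the same identity reproduces~(\ref{eq:d2f4}).

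The individual computations are routine once this framework is set; the step I would treat with the most care is the consistent handling of $\R$-linearity against $\C$-linearity. Because $\mathbf{f_{W}^4}$ is real-valued it is not holomorphic, so its differential cannot be $\C$-linear and everything must be expressed through the real structure. The correctness of~(\ref{eq:gradf4})--(\ref{eq:d2f4}) rests entirely on the pairing identity $\re\ps{u}{\Phi} = \ps{(\re u, \im u)}{(a,b)}$ and on the self-adjointness of $W_R$ and $W_I$, which is what guarantees that $\ps{W_R\Psi}{\Psi}$ and $\ps{W_I\Psi}{\Psi}$ are real and that the first differentials collapse to the form $2\re\ps{\cdot}{\cdot}$.
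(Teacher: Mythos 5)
Your proposal is correct and follows essentially the same route as the paper: the same splitting of $W$ into its self-adjoint parts $W_R$ and $W_I$, the same use of self-adjointness to collapse the first differentials to $2\re\ps{W_R\Psi}{\Phi}$ and $2\re\ps{W_I\Psi}{\Phi}$, the same pairing identity $\re\ps{f}{g}=\ps{(\re f,\im f)}{(\re g,\im g)}$ to pass from the $\R$-linear differential to the gradient, and the same product-rule differentiation of $G_f$ for the Hessian. The only cosmetic difference is that you write $\mathbf{f_{W}^4}$ directly as the sum of squares $\ps{W_R\Psi}{\Psi}^2+\ps{W_I\Psi}{\Psi}^2$, whereas the paper introduces the intermediate complex form $\mathbf{f_{W}^2}=\ps{W\Psi}{\Psi}$ and applies the chain rule to $\abs{\mathbf{f_{W}^2}}^2$ --- the two computations are identical in substance.
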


\begin{proof}
First, for $\mat{\phi \\ \psi}$ and $\mat{u \\ v}$ in $L^2(S^{d-1},\R) \times L^2(S^{d-1},\R)$ we denote
\begin{align*}
	 \Psi \pardef \phi+i\psi,\\
	 \Phi \pardef u+iv.
\end{align*}
   Let then \(\mathbf{f_{W}^2} : L^2(S^{d-1},\R) \times L^2(S^{d-1},\R) \to \C\) be defined by  
   \[\mathbf{f_{W}^2}\mat{\phi \\ \psi} \pardef \ps{ W\Psi}{\Psi}_{L^2(S^{d-1})},\]
    so we have \(\mathbf{f_{W}^4}\mat{\phi \\ \psi}=\abs{\mathbf{f_{W}^2}\mat{\phi \\ \psi}}^2\). 
   Moreover, the operator $W$ is not self-adjoint but is nevertheless an endomorphism.
   Hence, we can use its real and imaginary parts as defined by
    \[ W_R \pardef ( W +  W\etoile)/2,\quad  W_I \pardef ( W -  W\etoile)/2i.\]
    It follows that
    $
        \ps{ W_R \Psi}{\Psi}
            =\re \ps{ W \Psi}{\Psi}
            $
and $
\ps{ W_I \Psi}{\Psi} = \im \ps{ W \Psi}{\Psi}
$%
. 
Since the operators \( W_R\) et \( W_I\) are self-adjoint,
with 
\(\Phi \pardef u+iv\) we obtain
    \begin{align*}
        D\mathbf{f_{W}^2}\mat{\phi \\ \psi}\mat{u \\ v} 
            &= 2\re \ps{ W_R \Psi}{\Phi}  + 2i \re \ps{ W_I \Psi}{\Phi}.
    \end{align*}
    
Hence, the differential of $\mathbf{f_{W}^4}$ is given by
\begin{align*}
D\mathbf{f_{W}^4}\mat{\phi \\ \psi}\mat{u \\ v}
&=2\re \ps{\ps{ W_R \Psi}{\Psi} + i\ps{ W_I \Psi}{\Psi}}{2\re \ps{ W_R \Psi}{\Phi} + 2i \re \ps{ W_I \Psi}{\Phi}}\\
&=4\re\ps{\ps{ W_R \Psi}{\Psi}W_R\Psi + \ps{ W_I \Psi}{\Psi}W_I\Psi}{\Phi}\\
&=4\re \ps{Gf(\Psi)}{\Phi}
,\end{align*}
where $G_f$ is defined by~(\ref{eq:Gf}).
The gradient~(\ref{eq:gradf4}) is then written by recalling that for hermitian inner products, it holds that
\begin{equation}\label{eq:ps.reel}
\re \ps{f}{g} = \ps{\mat{\re f \\ \im f}}{\mat{\re g \\ \im g}}.
\end{equation}

Finally, we get the second derivative by differentiating the gradient.
Since the operators $W_R$ and $W_I$ are $\C$-linear and self-adjoint, it comes
\begin{equation*}
	DG_f(\Psi)(\Phi) = 2\re\ps{W_R\Psi}{\Phi}W_R\Psi + \ps{W_R\Psi}{\Psi}W_R\Phi + 2\re\ps{W_I\Psi}{\Phi}W_I\Psi + \ps{W_I\Psi}{\Psi}W_I\Phi.
\end{equation*}
\qed\end{proof}

As a consequence, we also have to adapt the projection $P_{\mathcal{C}_z}$~(\ref{eq:proj}) and use its counterpart $\mathbf{P_{\mathcal{C}_z}}$ defined as an affine endomorphism of  \(L^2(S^{d-1},\R) \times L^2(S^{d-1},\R)\) by
\begin{equation}\label{eq:PCz}
	\mathbf{P_{\mathcal{C}_z}}\mat{\phi \\ \psi} \pardef \mat{\re P_{\mathcal{C}_z}\Psi \\ \im P_{\mathcal{C}_z}\Psi},\quad \Psi = \phi+i\psi.
\end{equation}
%
%
The gradient  of the objective function's projection, denoted by $\mathbf{P_{f_{W}^4}}$, is then given by
\begin{equation*}%
    \grad \mathbf{P_{f_{W}^4}} \mat{\phi \\ \psi} = \mat{
    \re  {(\overset{\to}{P}_{\mathcal{C}_z})^\star} Gf ( {P_{\mathcal{C}_z}}\Psi) \\ 
    \im {(\overset{\to}{P}_{\mathcal{C}_z})^\star} Gf ( {P_{\mathcal{C}_z}}\Psi)
    },\quad \Psi = \phi+i\psi,
\end{equation*}
where ${\overset{\to}{P}_{\mathcal{C}_z}}$ is the linear part of the projection and where $G_f$ is given by~(\ref{eq:Gf}).

\subsection{Finite dimension approximation}\label{sec:dimension.finie}

For the finite dimension approximation, the operators  ${W_R}$ and ${W_I}$ have a complex matrix representation. 
In order to write the gradient in more natural terms of matrix-vector products, we thus denote $\mathbf{W_R}$ and $\mathbf{W_I}$ the corresponding real valued expanded matrices defined by
\begin{align*}
	\mathbf{W_R} \pardef \mat{\re({W_R}) & -\im({W_R}) \\ \im({W_R}) & \re({W_R})},\quad
	\mathbf{W_I} \pardef \mat{\re({W_I}) & -\im({W_I}) \\ \im({W_I}) & \re({W_I})}.
\end{align*}
Moreover, we assume the standard change of basis $M^\frac{1}{2}$, where $M$ is a discretization of the inner product, so that the transposition correctly represents the adjoint of operators.
Furthermore, with $\phi$ and $\psi$ two elements  of the discretized version of $L^2(S^{d-1})$, in the basis $M^\frac{1}{2}$, denote \[\mathbf{x} = \mat{\phi \\ \psi}.\]
With these notations it comes that
\begin{equation*}
	\mathbf{W_R} \mathbf{x} = \mat{\re W_R(\phi+i\psi) \\ \im W_R(\phi+i\psi)},\quad \mathbf{W_I} \mathbf{x} = \mat{\re W_I(\phi+i\psi) \\ \im W_I(\phi+i\psi)},
\end{equation*}
and by recalling~(\ref{eq:ps.reel}), we have
\begin{equation*}\begin{array}{lll}
	\mathbf{x}^T \mathbf{W_R} \mathbf{x} &= \re \ps{{W_R}(\phi+i\psi)}{(\phi+i\psi)} 
	&= \ps{{W_R}(\phi+i\psi)}{(\phi+i\psi)} ,\\
	\mathbf{x}^T \mathbf{W_I} \mathbf{x}  &= \re \ps{{W_I}(\phi+i\psi)}
	{(\phi+i\psi)} &= \ps{{W_I}(\phi+i\psi)}{(\phi+i\psi)}.
\end{array}\end{equation*}

It follows from~(\ref{eq:gradf4}) that  the (expanded) gradient of the form $f^4_W$ is given by
\[ \grad \mathbf{f_{W}^4} (\mathbf{x}) = 4 \mathbf{W_R} \mathbf{x} (\mathbf{x}^T\mathbf{W_R} \mathbf{x}) + 4 \mathbf{W_I} \mathbf{x}(\mathbf{x}^T \mathbf{W_I} \mathbf{x}).\]
It also follows from~(\ref{eq:d2f4}) that the corresponding hessian matrix $\mathbf{H}_{f^4_W}(\mathbf{x})$  is given~by
\begin{equation*}
    \mathbf{H}_{f^4_W} (\mathbf{x})
    = 8 \mathbf{W_R} \mathbf{x} \otimes \mathbf{W_R} \mathbf{x}
    + 4 \mathbf{W_R} (\mathbf{x}^T \mathbf{W_R} \mathbf{x}) 
    +8 \mathbf{W_I} \mathbf{x} \otimes \mathbf{W_I} \mathbf{x}
    + 4 \mathbf{W_I} (\mathbf{x}^T \mathbf{W_I} \mathbf{x}),
\end{equation*}
where the tensor product between two column vectors $\mathbf{a} \pardef \mat{a_1 \\ a_2 \\ \vdots}$ and $\mathbf{b} \pardef \mat{b_1 \\ b_2 \\ \vdots}$ is defined by the matrix given in columns by
\[\mathbf{a} \otimes \mathbf{b} \pardef \mat{\mathbf{a}b_1 & \mathbf{a}b_2 \dots}.\]

Finally, it then comes from a straightforward calculation that the finite dimension approximations of the gradient and the hessian matrix for the form $\mathbf{P_{f_{W}^4}}$ are 
\[\grad \mathbf{P_{f_{W}^4}} (\mathbf{x}) = \mathbf{\overset{\to}{P}}^T \grad \mathbf{f_W^4}(\mathbf{P}_{\mathcal C_z}\mathbf{x}),\]
and
\begin{equation*}
    \mathbf{H}_{P_{f^4_W}} (\mathbf{x})
    =\mathbf{\overset{\to}{P}}^T  
       \mathbf{H}_{f^4_W} (\mathbf{P}_{\mathcal C_z}\mathbf{x})
       \mathbf{\overset{\to}{P}},
\end{equation*}
where $\mathbf{\overset{\to}{P}}$ is the expanded matrix representation of the projection's linear part.

%

\bibliography{yann-biblio}   
\end{document}